\theoremstyle{plain}
        \newtheorem{theorem}{Theorem}[section]
        \newtheorem{lemma}[theorem]{Lemma}
        \newtheorem{proposition}[theorem]{Proposition}
        \newtheorem{corollary}[theorem]{Corollary}
        \theoremstyle{definition}
        \newtheorem{definition}[theorem]{Definition}
        \newtheorem{remark}[theorem]{Remark}
\newcommand{\N}{{\mathbb N}}
\newcommand{\Z}{{\mathbb Z}}
\newcommand{\C}{{\mathbb C}}
\newcommand{\R}{{\mathbb R}}
\newcommand{\mbA}{{\mathbb A}}
\newcommand{\g}{\mathfrak{g}}
\newcommand{\sfG}{\mathsf{G}}
\newcommand{\sfA}{\mathsf{A}}
\newcommand{\Hom}{\operatorname{Hom}}
\newcommand{\symb}{\operatorname{Sym}}
\newcommand{\sym}{\operatorname{Sym}}
\newcommand{\asymb}{\operatorname{ASym}}
\newcommand{\jsymb}{\operatorname{JSym}}
\newcommand{\op}{\operatorname{Op}}
\newcommand{\tr}{\operatorname{tr}}
\newcommand{\Op}{\operatorname{Op}}
\newcommand{\Ind}{\operatorname{Ind}}
\newcommand{\Tot}{\operatorname{Tot}}
\newcommand{\Mat}{\operatorname{Mat}}
\newcommand{\supp}{\operatorname{supp}}
\newcommand{\Ch}{\operatorname{Ch}}
\newcommand{\ch}{\operatorname{ch}}
\newcommand{\sfz}{\mathsf{z}}
\newcommand{\sfe}{\mathsf{e}}
\newcommand{\Exp}{\operatorname{Exp}}
\newcommand{\calF}{\mathcal{F}}
\newcommand{\calC}{\mathcal{C}}
\newcommand{\doublegroupoid}[4]{\xymatrix{#1  \ar@<2pt>[d] \ar@<-2pt>[d] & #2 \ar@<2pt>[l] 
\ar@<-2pt>[l] \ar@<2pt>[d] \ar@<-2pt>[d] \\ #3  & #4 \ar@<2pt>[l] \ar@<-2pt>[l]}}
\newcommand{\fibprod}[2]{{\,_{#1}\!\!\times\!\!_{#2}\,}}
\title[The transverse index theorem for Lie groupoids]{The transverse index theorem for proper cocompact actions of Lie groupoids}
\author{M.J.~Pflaum}
\address{M.J.~Pflaum: 
Department of Mathematics, University of Colorado,
\newline
Campus Box 395 
Boulder, CO 80309-0395
USA
}     
\email{markus.pflaum@colorado.edu}    
\author{H. Posthuma}    
\address{H. Posthuma: 
         \indent Korteweg-de Vries Institute for Mathematics,
        University of Amsterdam, P.~O.~Box 94248, 1090 GE Amsterdam, 
         The Netherlands
         }
         \email{H.B.Posthuma@uva.nl} 
         \author{X.~Tang}
         \address{X.~Tang:  Department of Mathematics, Washington University,
         \newline
         1 brookings Dr.  
           St. ~Louis, MO, 63130, USA}
            \email{xtang@math.wustl.edu}  
\begin{document}
\begin{abstract}
Given a proper, cocompact action of a Lie groupoid, we define a higher index pairing between 
invariant elliptic differential operators and 
smooth groupoid cohomology classes. We prove a 
cohomological index formula for this pairing by applying the van Est map and algebraic 
index theory. Finally we discuss in examples the meaning of the 
index pairing and our index formula.
\end{abstract}
\maketitle
\section*{Introduction}
Index theory of transversely elliptic operators is a subject which has deep roots in several areas 
of mathematics, e.g. the Lefschetz fixed point theorem, the $L^2$-index theorem and discrete 
series representations of Lie groups, the Baum-Connes conjecture and the Novikov conjecture. In 
this article, we will prove a cohomological index theorem for a general 
type of transversely elliptic differential operators associated to a proper, cocompact Lie groupoid 
action. 

The set up we consider is very broad and our main theorem comprises the following special cases:
\begin{itemize}
\item[1)] Atiyah's $L^2$-index theorem for covering spaces in \cite{atiyah}, and the ``higher index'' 
generalization of Connes--Moscovici to higher degree group cocycles in \cite{conmos}.
\item[2)] The $L^2$-index theorem for homogeneous spaces  of Lie groups 
in \cite{cmL2}, and the recent generalization to proper, cocompact actions of Lie groups on 
complete riemannian manifolds in \cite{wang}.
\item[3)] The cohomological index theorem for the pairing between elliptic operators on principal bundles for foliation groupoids and cohomology classes of the associated classifying spaces in \cite{connes,GL,G}.
\item[4)] The longitudinal index theorem on Lie groupoids proved in \cite{ppt} evaluating the natural pairing between differentiable groupoid cohomology and elliptic elements in the universal algebra of the associated Lie algebroid.
\end{itemize} 
Recent developments in noncommutative and differential geometry
  enable us to  improve and generalize these results. 
  
Let us now describe more precisely the setting we consider: Let $\sfG$ be a Lie groupoid over the 
unit space $M$, acting properly on a smooth manifold $Z$ with moment map  $\mu:Z\to M$ given  
by a surjective submersion. Assume that the quotient $Z/\sfG$ is compact. Since $\mu$ is a 
surjective submersion, the kernel of the map $T_\mu:T_zZ\to T_{\mu(z)}M$ for $z\in Z$ defines a 
regular foliation $\calF$ on $Z$ equipped with a natural $\sfG$-action. By a $\sfG$-invariant 
elliptic differential operator on $Z$, we mean a leafwise elliptic differential operator $D$ on $Z$ 
that is invariant under the $\sfG$-action. The analytic index of such a $\sfG$-invariant elliptic 
differential operator on $Z$ was introduced by Connes \cite{connes} (see also Paterson 
\cite{paterson}). Its significance can be seen among other by its connection to the Baum-Connes 
conjecture for Lie groupoids \cite{Tu:bc,hls}. All the recent result listed in 1)--4) above can be 
viewed as computing the pairing between this index class and certain cyclic cocycles in the 
context of noncommutative geometry.

Key to our generalization is to cast all these examples into the framework of groupoids and 
understand the role played by groupoid and Lie algebroid cohomology. This is perhaps least 
obvious in the example 2), but recall that  in the work by Connes and Moscovici \cite{cmL2} the 
$L^2$-index is determined by  integration on the relative Lie algebra cohomology of the Lie 
group $G$ and compact subgroup $H$. Then, one observes that the van Est morphism naturally 
maps the differentiable (or continuous) Lie group cohomology to the relative Lie algebra 
cohomology. So, this suggests that there is a natural pairing between the index of a $G$-invariant 
elliptic differential operator and the differentiable Lie group cohomology. This fits very well with 
our recent work in \cite{ppt}, where a natural morphism from the differentiable groupoid 
cohomology to the cyclic cohomology of the groupoid algebra 
$\calC^\infty_\textup{cpt} (\sfG)$ was constructed. 
Generalizing this idea, we introduce and 
compute natural index numbers of a $\sfG$-invariant elliptic differential operator on $Z$ 
associated differentiable groupoid cohomology classes  of $\sfG$.  

Our main theorem imposes no  restriction on the type of Lie groupoid $\sfG$ or the type of manifold $\sfG$ acts on. 
The initial ingredient of the proof of our index formula is the observation that by cocompactness of the $\sfG$-action $Z$ carries a well behaved calculus $\Psi^\infty_{\rm inv}(Z;\calF)$ of $\sfG$-invariant pseudodifferential operators on $Z$ along the foliation $\calF$. 
Moreover, a $\sfG$-invariant elliptic differential operator $D$ on $Z$ has a well defined index $\Ind(D)$ which is an element in the $K$-theory of the algebra $\Psi^{-\infty}_{\text{inv}}(\sfG; Z)$ of $\sfG$-invariant smoothing operators on $Z$. Next, we construct a canonical pairing between the differentiable groupoid cohomology $H^\bullet_{\text{diff}}(G;L)$ with values in a $\sfG$-representations $L$ of ``transversal densities'', and the $K$-theory of $\Psi^{-\infty}_{\text{inv}}(\sfG;Z)$,
\begin{equation}\label{eq:index-pairing}
\left<\ ,\ \right>: H^\bullet_{\text{diff}}(\sfG;L)\times K_0(\Psi^{-\infty}_{\text{inv}}(\sfG;Z))\to \C.
\end{equation}
The pairing of $\Ind(D)$ with classes in $H^\bullet_{\text{diff}}(\sfG)$ defines a family of index numbers of $D$. When $\sfG$ is a unimodular Lie group $G$,  a bi-invariant density $\Omega$ defines a class in $H^0(G;L)$ and gives rise to a trace on the $G$-invariant smoothing operators, and therefore the pairing $\left< [\Omega], \Ind(D)\right>$ of $\Ind(D)$  recovers the $L^2$-index of a $G$-invariant elliptic differential operator.    

We compute the cohomological formulas of the above index numbers using the localized groupoid index theory developed in \cite{ppt}. By localizing the support of the kernel of a smoothing operator to the diagonal, we introduce the localized K-theory  group $K_0^{\rm loc} (\Psi^{-\infty}_{\rm inv}(Z;\calF) )$.  And given a $\sfG$-invariant elliptic differential operator $D$ on $Z$, we introduce a localized index $\Ind_{\text{loc}}(D)$ as an element in 
$K^{\rm loc}_0 (\Psi^{-\infty}_{\rm inv}(Z;\calF) )$ which is mapped to $\Ind(D)\in K_0(\Psi^{-\infty}_{\rm inv}(Z;\calF) )$ under the natural forgetful map from $K_\bullet^{\rm loc} (\Psi^{-\infty}_{\rm inv}(Z;\calF) )$ to $K_\bullet (\Psi^{-\infty}_{\rm inv}(Z;\calF) )$.  Through the van Est morphism $\Phi_Z$, the localized version of the differentiable groupoid cohomology for the proper $\sfG$-action on $Z$ is the $\sfG$-invariant foliated cohomology $H^\bullet_{\calF}(Z;\mu^*L)^\sfG$ defined by the de Rham cohomology on the $\sfG$-invariant leafwise differential forms. We will define a canonical pairing 
\[
\left<\ ,\ \right>_{\rm loc}: H^\bullet_{\calF}(Z;\mu^*L)^\sfG \times K_0^{\rm loc}(\Psi^{-\infty}_{\text{inv}}(Z;\calF))\to \C,
\]
which is compatible with the pairing  in Eq. (\ref{eq:index-pairing}), i.e.,
\begin{equation}\label{eq:local-pairing}
\left< \Ind(D), \alpha\right>=\left<\Ind_{\rm loc}(D), \Phi_Z(\alpha)\right>_{\rm loc},
\end{equation}
where $\Phi_Z: H^\bullet_{\rm diff}(\sfG)\to H^\bullet_\calF(Z;\mu^*L)^\sfG$ is the van Est map, and $\alpha$ is a class in $H^\bullet_{\rm diff}(\sfG)$. 

The main theorem of this paper is the following cohomology formula of the index number 
introduced above:
 \begin{equation}\label{eq:index-formula}
 \left<\Ind_{\rm loc}(D), \alpha\right>_{\rm loc}:=\frac{1}{(2\pi\sqrt{-1})^k}\int_{T^*_\mu Z}(c\circ\pi)\pi^*\alpha\wedge \hat{A}(\calF^*)\wedge\ch(\sigma(D)),
 \end{equation}
for $\alpha \in H^{\rm even}_\calF(Z; \mu^*L)^\sfG$. Here, $\pi$ is the natural projection from $T^*_\mu Z$ to $Z$, and $c(x)$ is a cut-off function for the proper cocompact $\sfG$ action on $Z$, and $\hat{A}(\calF^*)$ is the $\hat{A}$ class of $T^*_\mu \calF$ as a foliated bundle over $\calF$, and 
finally $\ch(\sigma(D))$ is the leafwise Chern character of the $K$-theory class defined by the principal symbol $\sigma(D)$.  The integral is actually independent of the choice of cut-off function $c$.

This paper is organized as follows. In Section \ref{preli}, we will introduce the van Est map for a proper cocompact $\sfG$ action on $Z$, and also introduce the calculus of $\sfG$-invariant pseudodifferential operators. The index pairing between (localized) differentiable groupoid cohomology of $\sfG$ and the (localized) $K$-theory of $\sfG$-invariant smoothing operators is explained in Section \ref{sec:pairing}. In the same section, we introduce the index $\Ind(D)$ and the 
localized index $\Ind_{\rm loc}(D)$ of a $\sfG$-invariant elliptic differential operator $D$. In Section \ref{sec:index}, we prove the index formula Eq. (\ref{eq:index-formula}) by the algebraic index theory of deformation quantization. Interesting examples of our index theorem are discussed in Section \ref{sec:example}.  In particular, an example of a foliation index theorem on orbifolds is explained. 

\medskip

\noindent{\bf Acknowledgments:} We would like to thank Nigel Higson, Hang Wang, and Weiping Zhang for inspiring discussions on index formulas for elliptic $\sfG$-invariant pseudodifferential operators. 
Pflaum is partially supported by NSF grant DMS 1105670, and 
Tang is partially supported by NSF grant DMS 0900985. 

 \section{Preliminaries}
 \label{preli}
  \subsection{Proper actions of Lie groupoids}
\label{sec_properaction}
 Let $\sfG\rightrightarrows M$ be a Lie groupoid. In this paper we have the convention that we draw 
 arrows in $\sfG$ from left to right: $x\stackrel{g}{\rightarrow} y$ for an arrow $g\in\sfG$ with source 
 $s(g)=x$ and target $t(g)=y$. This means that the multiplication $g_1g_2$ of two arrows $g_1, 
 g_2\in\sfG$ is defined when $t(g_1)=s(g_2)$. The unit map of the groupoid will be denoted by $u: M \rightarrow \sfG$. For fixed $x\in M$, we write $\sfG^x$ for the submanifold of all arrows $g\in \sfG$ with $s(g)=x$, and $\sfG_x$ for all arrows with $t(g)=x$.
 Associated to the Lie groupoid $\sfG$ is its Lie algebroid which in the following will be denoted by
$\sfA$.
 
 A left action of $\sfG$ on a manifold $Z$ is given by a submersion $\mu:Z\to M$, called the {\em moment map}, which controls the action of $\sfG$ given by a smooth map
 \[
    \sfG\fibprod{t}{\mu} Z\to Z,
 \]
 where $ \sfG\fibprod{t}{\mu} Z:=\{(g,z),~t(g)=\mu(z)$. (This is our general notation for fibered product 
 of manifolds.) We write the action above simply as $(g,z)\mapsto g z$. This map should satisfy the usual axioms for an action, i.e., $g_1(g_2z)=(g_1g_2)z$ and $u(x)z = z$, whenever defined,  and $\mu(gz)=s(g)$.

We denote by $\calF$ the foliation on $Z$ by the fibers of $\mu$. The foliation is given by the (involutive) sub-bundle
$T_\mu Z \subset TZ$ consisting of all  $X \in TZ$ which are tangent to the fibers of $\mu$ or in other words which are
tangent to the leaves of the foliation $\calF$. Note that, here, by slight abuse of language, we identify the leaves of
the foliation with the fibers of the moment map $\mu$, even if the fibers are not connected.  

We write $T^*_\mu Z$ for the bundle dual to
$T_\mu Z$. The canonical projection of $T_\mu Z$ (resp.~$T_\mu^* Z$) onto $Z$ is denoted 
by $\pi_{T_\mu Z}$ (resp.~$\pi_{T_\mu^* Z}$). Since $g\in \sfG$ induces a diffeomorphism from $\mu^{-1} \big( t(g) \big)$ to
$\mu^{-1} \big( s (g)\big)$,  in the following also will be denoted by $g$,  there is a natural 
action of $\sfG$ on $T_\mu Z$ and $T^*_\mu Z$ by the differentiation.

\begin{definition}
\label{cut-off}
A \emph{cut-off density on $Z$ adapted to the $\sfG$-action} is
a nowhere vanishing smooth section $c\in\Gamma_\textup{cpt}^\infty(Z, \left| \bigwedge^\textup{top}\right| \sfA^*)$ satisfying
\begin{enumerate}[(i)]
\item
  For every compact $K\subset M$, the set $\supp (c) \cap \sfG(K)$ is compact.
\item  
 \label{eq-cut-off}
 $\int\limits_{\sfG^{\mu(z)}}c(g^{-1}z)=1$, for all $z\in Z$.
\end{enumerate}
\end{definition}
If the $\sfG$-action on $Z$ is cocompact, the first condition just means that the support of $c$ is 
compact. Notice that a cut-off density on $Z$ can be regarded as a (left)  Haar system on the 
transformation groupoid $\sfG \ltimes Z$.
Therefore, it follows by  \cite[Sec.~1.7]{Tu:bc} that such a density exists if and only if the action of 
$\sfG$ on $Z$ is proper.

Throughout this paper, we fix an \emph{invariant leafwise riemannian metric} on $Z$, i.e. a riemannian metric $\eta$ 
on $T_\mu Z$ such that $\eta (gX,gY) = \eta (X,Y)$ for all $X,Y \in T_\mu Z$ and $g\in G$ with
$\mu\big( \pi_{T_\mu Z} (X)\big)=\mu \big( \pi_{T_\mu Z} (Y)\big)= t(g)$.
If a cut-off density $c$ on $Z$ is given, and $\varrho$ an arbitrary  riemannian metric on $Z$,   we obtain an
invariant leafwise riemannian metric on $Z$ by putting
\[
    \eta (X,Y) := \int_{\sfG^{\mu (z)}} \varrho (g^{-1}X,g^{-1}Y) c(g^{-1}z)  \quad \text{for $z\in Z$, and 
    $X,Y\in T_{\mu,z}  Z$}\: .
\] 
Since by the observation above $c$ is a left Haar system on $\sfG\ltimes Z$, one easily checks that this metric 
is invariant.
\subsection{The van Est morphism} 
\label{vanest}
For any proper action of a Lie groupoid $\sfG$ on $Z$ with moment map $\mu:Z\to M$, 
Crainic \cite{crainic} has constructed, for any representation $E$ of $\sfG$, a ``van Est'' map
\[
\Phi_Z:H^\bullet_{\rm diff}(\sfG;E)\to H^\bullet_\calF(Z;\mu^*E)^\sfG,
\]
from the differentiable groupoid cohomology $H^\bullet_{\rm diff}(\sfG;E)$ with values in $E$ to 
the $\sfG$-invariant part of the foliated de Rham cohomology $ H^\bullet_\calF(Z;\mu^*E)$ with values in the flat vector 
bundle $\mu^*E$. In this section, we give an alternative construction of this map using a so-called 
double groupoid defined by the action to construct a morphism on the level of cochain 
complexes. In fact, a generalization of this morphism exists for any action, not necessarily proper ones.

The action of $\sfG$ on $Z$ gives a natural example of a double groupoid, which is essentially a 
groupoid in the category of groupoids, c.f. \cite{bm,MT}. More concretely, we have a square 
of groupoids
\begin{equation}\label{square}
\doublegroupoid{\sfG{_t\times_\mu} Z}{\sfG{_t\times_\mu} (Z{_\mu \times_\mu}Z)}{Z}{Z{_\mu \times_\mu}Z}.
\end{equation}
The manifold $Z{_\mu \times_\mu}Z$ has a natural map to $M$ by mapping $(z_1, z_2)$ to $\mu(z_1)=\mu(z_2)$ in $M$, and we denote this map simply also by $\mu$.  This explains the definition of $\sfG{_t\times_\mu} (Z{_\mu \times_\mu}Z)$. As both $\mu$ and $t$ are assumed to be surjective submersions, the fiber products in Square \ref{square} are all smooth manifolds. Furthermore, observe that $Z{_\mu \times_\mu}Z$ has a natural groupoid structure with the unit space $Z$. Similarly, there is a natural map $\nu$ from $\sfG{_{t}\times _\mu}Z$ to $\sfG$, and $\sfG{_t\times_\mu} (Z{_\mu \times_\mu}Z)$ can be identified as the fiber product of $\sfG{_{t}\times _\mu}Z$ with itself over the map $\nu$ and therefore is equipped with a natural groupoid structure. This way, the two horizontal edges of the square (\ref{square}) are  Lie groupoids. The $\sfG$ action on $Z$ lifts to the diagonal action of $\sfG$ on $Z{_\mu \times_\mu}Z$. The $\sfG$-actions on $Z$ and $Z{_\mu \times_\mu}Z$ equip ${\sfG{_t\times_\mu} Z}$ and $\sfG{_t\times_\mu} (Z{_\mu \times_\mu}Z)$ with action groupoid structures, and make the two vertical edges of the square (\ref{square}) into groupoids. It is straightforward to check the above square of groupoids does satisfy the definition of a full double Lie groupoid, c.f. \cite[Defn. 3.1, 3.2]{MT}. 

As is explained in \cite[Prop. 3.10]{MT}, the nerve functor on a full double Lie groupoid produces a bisimplicial manifold $N^{\bullet, \bullet}$. In our example (\ref{square}), this bisimplicial manifold is defined as follows,
\[
N^{p,q}(\sfG;Z):=\sfG^{(p)}\fibprod{\tau_p}{\mu_q} Z^{(q)}_\mu,
\]
where
\[
\sfG^{(p)}:=\underbrace{\sfG \fibprod{t}{s}\ldots \fibprod{t}{s}\sfG}_{p+1~ \rm copies},\quad
Z_{\mu}^{(q)}:=\underbrace{Z\fibprod{\mu}{\mu}\ldots \fibprod{\mu}{\mu} Z}_{q+1 ~\rm copies}.
\]
The maps $\tau_p:\sfG^{(p)}\to M$ and $\mu_q:Z_{\mu}^{(q)}\to M$ are defined by
\[
\tau_p(g_0,g_1, \cdots, g_p):=t(g_p),\qquad \mu_q(z_0, \cdots, z_q)=\mu(z_0).
\]
The simplicial structure on $\sfG^{(\bullet)}\fibprod{\tau}{\mu_q} Z_{\mu}^{(q)}$ is lifted from the simplicial structure on the nerve space $\sfG^{(\bullet)}$ of the groupoid $\sfG$, and the simplicial structure on $\sfG^{(p)}\fibprod{\tau_p}{\mu} Z_\mu^{(\bullet)}$ is lifted from the simplicial structure on the nerve space $Z_\mu^{(\bullet)}$ of the groupoid $Z\fibprod{\mu}{\mu}Z\rightrightarrows Z$. Accordingly, the space $C^{p,q}_{\rm diff}(\sfG;Z):=\calC^\infty(N^{p,q}(\sfG;Z))$ of smooth functions on $N^{p,q}(\sfG;Z)$ forms a bicosimplicial complex. Define the differentiable cohomology of the action of $\sfG$ on $Z$ to be the cohomology of the total complex of $C^{\bullet,\bullet}_{\rm diff}(\sfG;Z)$, which is quasi-isomorphic to the diagonal by the Eilenberg--Zilber theorem. This cohomology is denoted by $H^\bullet_{\rm diff}(\sfG;Z)$.

The differentiable Lie groupoid cohomology $H^\bullet_{\rm diff}(\sfG)$ of $\sfG$ is defined as the cohomology of the simplicial complex $C^\bullet_{\rm diff}(\sfG):=\calC^\infty(\sfG^{(\bullet)})$. There is a natural forgetful map from $N^{\bullet,\bullet}(\sfG; Z)$ to the nerve space $\sfG^{(\bullet)}$ of $\sfG$. This map induces a natural morphism $\alpha$ from $C^\bullet_{\rm diff}(\sfG)$ to $C^{\bullet,\bullet}_{\rm diff}(\sfG;Z)$, and therefore a morphism 
\begin{equation}\label{eq:alpha}
\alpha:H^\bullet_{\rm diff}(\sfG)\rightarrow H^\bullet_{\rm diff}(\sfG;Z). 
\end{equation}

\begin{proposition}\label{prop:alpha} The map $\alpha$ defined by Eq. (\ref{eq:alpha}) is an isomorphism.
\end{proposition}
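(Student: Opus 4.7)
My plan is to prove that $\alpha$ is an isomorphism by analyzing the double complex $C^{\bullet,\bullet}_{\rm diff}(\sfG;Z)$ via the spectral sequence associated to the filtration by the $\sfG$-degree $p$. The key reduction is to show that for each fixed $p\geq 0$ the row complex
\[
\bigl(C^{p,\bullet}_{\rm diff}(\sfG;Z),\delta_Z\bigr)=\bigl(\calC^\infty(\sfG^{(p)}\fibprod{\tau_p}{\mu_\bullet} Z^{(\bullet)}_\mu),\delta_Z\bigr),
\]
where $\delta_Z$ is the simplicial differential coming from the submersion groupoid $Z\fibprod{\mu}{\mu}Z\rightrightarrows Z$, has cohomology equal to $C^p_{\rm diff}(\sfG)=\calC^\infty(\sfG^{(p)})$ concentrated in degree $q=0$, with the augmentation map being precisely $\alpha$ restricted to the $p$-th column. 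Granting this, the $E_1$-page of the column filtration collapses to the complex $(C^\bullet_{\rm diff}(\sfG),\delta_\sfG)$ sitting in the single row $q=0$, and the spectral sequence degenerates at $E_2$, giving the result.

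To prove the row-wise statement I will construct an explicit contracting homotopy. Using that $\mu:Z\to M$ is a surjective submersion, choose a leafwise density $\omega$ on $T_\mu Z$ (e.g.~the one induced by the invariant leafwise riemannian metric $\eta$) together with a smooth function $\chi\in\calC^\infty(Z)$ such that $\mu|_{\supp(\chi)}$ is proper and
\[
\int_{\mu^{-1}(x)}\chi\,\omega=1\qquad\text{for all }x\in M.
\]
Such a $\chi$ exists by a partition of unity argument on $M$ together with local smooth sections of $\mu$. Define
\[
h:C^{p,q}_{\rm diff}(\sfG;Z)\to C^{p,q-1}_{\rm diff}(\sfG;Z),
\]
\[
(hf)(g_0,\dots,g_p,z_0,\dots,z_{q-1}):=\int_{\mu^{-1}(\mu(z_0))}f(g_0,\dots,g_p,z,z_0,\dots,z_{q-1})\,\chi(z)\,\omega(z),
\]
which is well-defined because $\mu|_{\supp(\chi)}$ is proper. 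A direct simplicial identity check, analogous to the usual proof that a groupoid admitting a Haar system of probability measures has trivial higher cohomology with smooth coefficients, yields $h\,\delta_Z+\delta_Z\,h=\operatorname{id}$ in positive $q$-degrees and identifies $\ker(\delta_Z)$ in degree $0$ with the image of $\alpha$.

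The main obstacle is not the homotopy formula itself, which is a standard simplicial calculation, but rather verifying the existence and smoothness of the cut-off function $\chi$ with the global normalization property across the non-compact base $M$. The existence of such $\chi$ for any surjective submersion $\mu$ follows from the same philosophy that produces cut-off densities for proper groupoid actions (cf.\ Definition \ref{cut-off}): one uses a locally finite refinement of trivializing charts for $\mu$, fiberwise bump functions of total integral one on each chart, and a subordinate partition of unity on $M$ to glue them into a global $\chi$. Once this point is settled, the rest of the argument is formal, and one may even identify the above spectral sequence with the one obtained by comparing the bisimplicial nerve to the constant bisimplicial nerve $\sfG^{(\bullet)}$, as guaranteed by the Eilenberg--Zilber theorem.
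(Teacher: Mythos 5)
Your argument is essentially the paper's own proof: you filter the bicomplex $C^{\bullet,\bullet}_{\rm diff}(\sfG;Z)$ by the $\sfG$-degree $p$ and show the rows have cohomology $C^p_{\rm diff}(\sfG)$ concentrated in degree $q=0$, so the spectral sequence degenerates and identifies $\alpha$ with an isomorphism. The only difference is that where the paper cites Crainic's vanishing theorem for the differentiable cohomology of the proper groupoid $Z\fibprod{\mu}{\mu}Z\rightrightarrows Z$, you prove that vanishing inline via the explicit contracting homotopy built from a normalized fiberwise cut-off $\chi$ with $\int_{\mu^{-1}(x)}\chi\,\omega=1$ --- which is exactly the standard proof of the cited result, and is correct.
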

\begin{proof}
Consider the spectral sequence associated to the $p$-filtration on $\calC^\infty(N^{\bullet, \bullet}(\sfG;Z))$. The $E_2$-term of the spectral sequence can be computed
\[
E_2^{p,0}=H^{p}_{\rm diff}(\sfG),\qquad E_1^{p,q}=0,\ q\geq 1. 
\]
This follows from the fact that the complex $C^{p,\bullet}_{\rm diff}(\sfG;Z)$ identifies with the 
complex computing groupoid cohomology of the Lie groupoid $Z\fibprod{\mu}{\mu}Z\rightrightarrows Z$ with values in $C^p_{\rm diff}(\sfG)$. Since $Z\fibprod{\mu}{\mu}Z\rightrightarrows Z$ is a proper groupoid, its cohomology is concentrated in degree zero by \cite[\S 2.1.]{crainic} and one finds the result for $E_2^{p,q}$.
Therefore the spectral sequence degenerates at $E_2$. We notice that $E_1^{p,0}$ is exactly the complex of differentiable groupoid cohomology of $\sfG$, and therefore $\alpha$ is an isomorphism. 
\end{proof}

We now give an alternative complex computing $H^\bullet_{\rm diff}(\sfG;Z)$. The action of the 
groupoid $\sfG$ on  the groupoid $Z{_\mu\times_\mu}Z\rightrightarrows Z$ defines a natural action 
of $\sfG$ on the $Z_\mu^{(\bullet)}$, and we define 
$C^p_{\rm diff}(Z)^\sfG:=\calC^\infty_{\rm inv}(Z_\mu^{(p)})$. This defines a graded vector space $C^\bullet_{\rm diff}(Z)^\sfG$ which inherits a differential $d$ defined by
\begin{equation}
\label{diff-def}
d\varphi(z_0,\ldots,z_{k+1}):=\sum_{i=0}^{k+1}(-1)^i\varphi(z_0,\ldots,\hat{z}_i,\ldots,z_{k+1}),
\end{equation}
where $\varphi\in C_{\rm diff}^k(Z)^\sfG$ and $\hat{z}_i$ means we omit $z_i$ from the argument 
of $\varphi$. The corresponding cohomology is denoted by $H_{\rm diff}^\bullet(Z_\mu)^\sfG$. 

\begin{corollary}\label{cor:gpdcoh} When the $\sfG$-action on $Z$ is proper, 
$H^\bullet_{\rm diff}(\sfG;Z)$ is isomorphic to $H_{\rm diff}^\bullet(Z_\mu)^\sfG$.
\end{corollary}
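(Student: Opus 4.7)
The plan is to run a spectral sequence argument dual to the one used in Proposition \ref{prop:alpha}, using the complementary filtration on the bicosimplicial complex $\calC^\infty(N^{\bullet,\bullet}(\sfG;Z))$. Where Proposition \ref{prop:alpha} takes the $p$-filtration (collapsing the $Z$-direction first by properness of $Z\fibprod{\mu}{\mu}Z\rightrightarrows Z$), I will instead use the $q$-filtration, so the initial differential on the $E_0$-page is the one in the $p$-direction coming from the simplicial nerve of $\sfG$.

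The key observation is that for each fixed $q\geq 0$, the column complex
\[
C^{\bullet,q}_{\rm diff}(\sfG;Z)=\calC^\infty\!\left(\sfG^{(\bullet)}\fibprod{\tau_\bullet}{\mu_q}Z_\mu^{(q)}\right)
\]
is naturally identified with the differentiable cochain complex of the action groupoid $\sfG\ltimes Z_\mu^{(q)}$, where $\sfG$ acts diagonally on $Z_\mu^{(q)}$. This identification is immediate from the double groupoid structure underlying (\ref{square}): $N^{\bullet,q}(\sfG;Z)$ is precisely the nerve of $\sfG\ltimes Z_\mu^{(q)}$, with matching face and degeneracy maps. Since the $\sfG$-action on $Z$ is proper, its diagonal lift to $Z_\mu^{(q)}$ is also proper, so $\sfG\ltimes Z_\mu^{(q)}$ is a proper Lie groupoid.

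By the vanishing theorem cited in \cite[\S 2.1]{crainic}, the differentiable cohomology of a proper Lie groupoid is concentrated in degree zero, where it consists of the invariants. Applied here, this gives $E_1^{p,q}=0$ for $p\geq 1$ and
\[
E_1^{0,q}=\calC^\infty_{\rm inv}(Z_\mu^{(q)})=C^q_{\rm diff}(Z)^\sfG .
\]
The induced differential $d_1$ on $E_1^{0,\bullet}$ comes from the simplicial structure of $Z_\mu^{(\bullet)}$ and a direct inspection shows it coincides with the alternating sum of face maps in (\ref{diff-def}). The spectral sequence therefore degenerates at $E_2$ with $E_2^{0,q}=H^q_{\rm diff}(Z_\mu)^\sfG$ and zero elsewhere, yielding $H^\bullet_{\rm diff}(\sfG;Z)\cong H^\bullet_{\rm diff}(Z_\mu)^\sfG$.

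The only step requiring care is the identification of the column complex with the cochain complex of $\sfG\ltimes Z_\mu^{(q)}$ so that Crainic's vanishing theorem applies — in particular, checking that the $p$-direction face and degeneracy maps on $N^{\bullet,q}(\sfG;Z)$ are the standard action groupoid ones. Once this bookkeeping is done, the corollary follows formally from properness and the degeneration of the spectral sequence.
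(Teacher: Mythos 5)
Your argument is correct and is essentially the paper's own proof: the paper likewise filters the bicomplex $\calC^\infty(N^{\bullet,\bullet}(\sfG;Z))$ by the degree $q$, kills the positive-degree part of each column via Crainic's vanishing theorem for the proper action groupoid (you spell out that the relevant groupoid for the $q$-th column is $\sfG\ltimes Z_\mu^{(q)}$, whose properness follows from that of $\sfG\ltimes Z$), and identifies the surviving $E_1^{0,\bullet}$-row with $(C^\bullet_{\rm diff}(Z)^\sfG,d)$. Your write-up is simply a more detailed version of the same degeneration argument.
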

\begin{proof}
We consider the spectral sequence associated to the filtration of the bicomplex $\calC^\infty(N^{\bullet, \bullet}(\sfG;Z))=\bigoplus_{p,q}C^\infty(N^{p,q}(\sfG;Z))$ with respect to the degree $q$: $E_2^{r,s}(\sfG;Z)$ is computed to be
\[
E_2^{0,s}=H^s(\calC^\infty_{\rm inv}(Z_\mu^{(\bullet)}), d),\qquad E_2^{r,s}=0,\ s\geq 1,
\]
this time because the groupoid $\sfG\ltimes Z\rightrightarrows Z$ is proper.
Therefore, the spectral degenerates at $E_2$ again. It follows that the cohomology $H^\bullet_{\rm diff}(\sfG;Z)$ is isomorphic to $H^\bullet_{\sfG}(Z_\mu)$.
\end{proof}

Crucial for the construction of the van Est map is a localized version of the bicomplex $\calC^\infty(N^{(\bullet, \bullet)}(\sfG; Z))$. 
Notice that $\sfG^{(p)}\fibprod{\tau_p}{\mu}Z$ is embedded inside $\sfG^{(p)}{_{\tau_p} 
\times_{\mu_q}} Z_{\mu}^{(q)}$ via the canonical diagonal inclusion of $Z\hookrightarrow Z^{(q)}
$. This embedding is also compatible with the simplicial structures. Define
\[
C^{p,q}_{\rm loc}(G;Z):=\operatorname{germs}_{\sfG^{(p)}{_{\tau_p}\times_{\mu_q}}Z}\left(\calC^\infty \left(\sfG^{(p)}{_{\tau_p}\times_{\mu_q}}Z^{(q)}_\mu\right)\right),
\]
taking the germ of a smooth function on $\sfG^{(p)}{_{\tau_p}\times_{\mu_q}}Z^{(q)}_\mu$ at the 
embedded submanifold $\sfG^{(p)}\fibprod{\tau_p}{\mu}Z$.
There is a natural morphism 
\[
L:C^{\bullet,\bullet}_{\rm diff}(\sfG;Z))\to C^{\bullet,\bullet}_{\rm loc}(\sfG;Z),
\]
by taking germs at $\sfG^{(p)}\fibprod{\tau_p}{\mu}Z$, 
and we equip the right hand side with the induced differentials turning 
it into a bicosimplicial complex. Denote the inherited differentials by $d_\sfG$ and $d_Z$. The 
cohomology of the total complex of this bicomplex is denoted by $H^\bullet_{\rm loc}(\sfG;Z)$. 
The morphism $L$ above induces a map 
\begin{equation}\label{eq:L}
L:H^\bullet_{\rm diff}(\sfG;Z)\rightarrow H^\bullet_{\rm loc}(\sfG;Z).
\end{equation}

On $Z$,  let $\Omega^\bullet_\calF$ be the space of leafwise differential forms on $Z$ with 
respect to the foliation $\calF$. This forms a fine sheaf over $Z$ with a natural $\sfG$ action 
induced by the $\sfG$ action on $Z$. To compute $H^\bullet_{\rm loc}(\sfG;Z)$, we will consider 
the following bicomplex,
\[
C^{p, q}_{\rm diff}(\sfG;\calF):=\Gamma\left(\sfG^{(p)}\fibprod{\tau_p}{\mu} Z, \beta^*\Omega^q_\calF\right),
\]
where $\beta:\sfG^{(p)}{_{\tau_p}\times_\mu Z}\to Z$ is defined by $\beta(g_0, \cdots, g_p;z)=z$.
 The cohomology of the total complex of $C^{p,q}_{\rm diff}(\sfG;\calF)$ is denoted by
  $H^\bullet_{\rm diff}(\sfG; \calF)$.
\begin{proposition}\label{prop:vanest}
There is a natural isomorphism
\[
\lambda: H^\bullet_{\rm loc}(\sfG;Z)\stackrel{\cong}{\longrightarrow} H^\bullet_{\rm diff}(\sfG;\calF).
\]
\end{proposition}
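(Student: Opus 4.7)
My plan is to construct $\lambda$ explicitly as a morphism of bicomplexes $C^{\bullet,\bullet}_{\rm loc}(\sfG;Z)\to C^{\bullet,\bullet}_{\rm diff}(\sfG;\calF)$ given by leafwise antisymmetric Taylor expansion at the diagonal, and then to check column by column that it is a quasi-isomorphism via a parametric, foliated version of the classical Alexander-Spanier/Hochschild-Kostant-Rosenberg identification. Concretely, for a germ $\varphi$ of a smooth function on $\sfG^{(p)}\fibprod{\tau_p}{\mu_q}Z^{(q)}_\mu$ along the diagonally embedded submanifold $\sfG^{(p)}\fibprod{\tau_p}{\mu}Z$, I would set
\[
\lambda(\varphi)(g_0,\ldots,g_p;z)(X_1,\ldots,X_q):=\frac{1}{q!}\sum_{\sigma\in S_q}\operatorname{sgn}(\sigma)\bigl(\partial^{(1)}_{X_{\sigma(1)}}\cdots\partial^{(q)}_{X_{\sigma(q)}}\varphi\bigr)(g_0,\ldots,g_p;z,\ldots,z),
\]
where $X_1,\ldots,X_q\in T_{\mu,z}Z$ and $\partial^{(j)}_X$ denotes differentiation in the $j$-th of the $q+1$ factors of $Z^{(q)}_\mu$ along the leafwise vector $X$. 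Since the fiber product is taken over $\mu$, only leafwise normal directions to the diagonal are well defined, and the formula produces a section of $\beta^*\Omega^q_\calF$ over $\sfG^{(p)}\fibprod{\tau_p}{\mu}Z$.

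I would then verify that $\lambda$ is a morphism of bicomplexes. In the $p$-direction, both the source and target differentials are pullbacks along the simplicial face and degeneracy maps of the nerve of $\sfG$, and these commute with the leafwise derivatives in the $Z$-entries, so compatibility with $d_\sfG$ is immediate. In the $q$-direction, the alternating-sum Alexander-Spanier differential $d_Z$ on $C^{p,\bullet}_{\rm loc}$ from Eq.~(\ref{diff-def}) transports, under antisymmetric Taylor expansion, to the leafwise de Rham differential $d_\calF$ on sections of $\beta^*\Omega^\bullet_\calF$. This is a standard combinatorial check obtained by differentiating $(d_Z\varphi)(z_0,z,\ldots,z)$ at the diagonal and identifying the result with the Koszul formula for $d_\calF$.

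The main work is to prove, for each fixed $p$, that the column $\lambda\colon C^{p,\bullet}_{\rm loc}\to C^{p,\bullet}_{\rm diff}$ is a quasi-isomorphism. This is a parametric, foliated incarnation of the Alexander-Spanier/HKR theorem: the complex of germs at the total diagonal of smooth functions on $X^{q+1}$, with alternating-face differential, computes $\Omega^\bullet(X)$. In our setting, since $\mu$ is a submersion, a leafwise tubular neighborhood identifies a neighborhood of the diagonal $Z\hookrightarrow Z^{(q)}_\mu$ with an open neighborhood of the zero section of $(T_\mu Z)^{\oplus q}$ over $Z$, and, after pullback along $\tau_p$, identifies $C^{p,q}_{\rm loc}$ with formal power series in $q$ leafwise normal variables over $\sfG^{(p)}\fibprod{\tau_p}{\mu}Z$. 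Under this identification the Alexander-Spanier differential acts only on the formal variables, $\lambda$ reduces to the antisymmetrization of their linear parts, and an explicit contracting homotopy obtained by radial integration in the leafwise normal directions yields a quasi-inverse on higher cohomology. Once columnwise quasi-isomorphism is established, a standard spectral-sequence comparison argument applied to the $p$-filtration on both total complexes promotes $\lambda$ to a quasi-isomorphism of totalizations, giving the desired $H^\bullet_{\rm loc}(\sfG;Z)\cong H^\bullet_{\rm diff}(\sfG;\calF)$.

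The main obstacle is the construction of the contracting homotopy in Step 3: one must ensure that the radial leafwise homotopy both descends to germs at the diagonal and is natural in $p$, i.e.\ commutes with the pullback along the groupoid face maps of $\sfG$. This requires picking the leafwise tubular neighborhood of $Z\hookrightarrow Z^{(q)}_\mu$ in a way which is compatible with the simplicial structure in $q$ and with the parametrization by $\sfG^{(p)}$; with such a choice, the rest of the argument is standard.
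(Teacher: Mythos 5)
Your morphism is the paper's morphism: on decomposable germs $f_0\otimes\cdots\otimes f_q$ your antisymmetrized first-order jet reduces (up to the factor $1/q!$, a normalization immaterial for the isomorphism statement) to the classical Alexander--Spanier-to-de Rham map $\lambda(f_0\otimes\cdots\otimes f_q)=f_0\,df_1\wedge\cdots\wedge df_q$, and your overall strategy --- columnwise quasi-isomorphism for each fixed $p$ followed by a spectral-sequence comparison of total complexes --- is also the paper's. The difference lies in how the columnwise step is established. The paper packages it sheaf-theoretically on $Z$: it introduces the fine sheaf of complexes $\mathsf{C}_{\rm AS}^\bullet$ of germs at the diagonal, identifies $C^{p,q}_{\rm loc}(\sfG;Z)$ and $C^{p,q}_{\rm diff}(\sfG;\calF)$ with global sections of $\beta_p^*\mathsf{C}_{\rm AS}^q$ and $\beta_p^*\Omega^q_\calF$ respectively, and invokes the $\sfG$-equivariant quasi-isomorphism of sheaf complexes $\lambda:\mathsf{C}_{\rm AS}^\bullet\to\Omega^\bullet_\calF$ from \cite[Lem.~1.5]{conmos}; fineness of both sheaves then yields the quasi-isomorphism on global sections. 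This formulation dissolves what you single out as the main obstacle, namely choosing leafwise tubular neighborhoods of $Z\hookrightarrow Z^{(q)}_\mu$ compatibly with the simplicial structure in $q$ and the parametrization by $\sfG^{(p)}$: once everything is localized to stalks, the required contracting homotopy only has to exist locally (in a single foliation chart, where inserting the base leafwise coordinate in the zeroth slot gives the standard extra-degeneracy contraction of the fiberwise Alexander--Spanier complex, with $H^0$ the leafwise locally constant germs on both sides), and no global or simplicially coherent choice is ever needed --- only the map $\lambda$ itself must be natural and $\sfG$-equivariant, which it is since it is canonical. Your radial-integration homotopy would also work, but you would then genuinely have to carry out the compatibility bookkeeping you describe; localizing first, as the paper does, buys you that step for free.
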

\begin{proof}
This is just a $\sfG$-equivariant version of the well-known isomorphism between Alexander--Spanier cohomology and de Rham cohomology, see e.g. \cite[\S 1]{conmos}. 

We point out that on $Z$, there is a fine (pre)sheaf of complexes $\mathsf{C}_{\rm AS}^\bullet$ defined by 
\[
\mathsf{C}_{\rm AS}^{k}:=\operatorname{germs}_{Z}\left(\calC^\infty(Z^{(k)}_\mu)\right),
\]
where $Z$ is embedded diagonally in $Z^{(k)}_\mu$.  And the differential $d_Z$ on $\calC^\infty(Z^{(k)}_\mu)$ naturally descends to the differential $d_Z$ on $\mathsf{C}_{\rm AS}^\bullet$. The space $C^{p,q}_{\rm loc}(\sfG;Z)$ is identified with the space of global sections of the sheaf $\beta_p^*\mathsf{C}_{\rm AS}^q$ on $\sfG^{(p)}\fibprod{\tau_p}{\mu}Z$. From the $\sfG$ action on $Z$, the sheaf $\mathsf{C}_{\rm AS}^\bullet$ is equipped with a canonical $\sfG$ action. As the sheaf of complexes $\beta_q^*\mathsf{C}_{\rm AS}^\bullet$ is fine, the cohomology of $C^{\bullet,\bullet}_{\rm loc}(\sfG;Z)$ is isomorphic to the groupoid differentiable cohomology of the groupoid $\sfG\fibprod{\tau}{\mu}Z\rightrightarrows Z$ with the coefficient $\mathsf{C}_{\rm AS}^\bullet$. 

For $C^{\bullet,\bullet}_{\rm diff}(\sfG; \calF)$, we consider  a sheaf $\Omega^\bullet_\calF$ on $Z$ of leafwise differential forms along $\calF$ with the de Rham differential $d_\calF$.  And $C^{p,q}_{\rm diff}(\sfG; \calF)$ is the space of global sections of the sheaf $\beta_p^*\Omega^q_\calF$. This sheaf is also equipped with a canonical $\sfG$ action. The cohomology of $C^{\bullet,\bullet}_{\rm diff}(\sfG; \calF)$ is isomorphic to the groupoid differentiable cohomology of $\sfG\fibprod{\tau}{\mu}Z\rightrightarrows Z$ with the coefficient $\Omega^
\bullet_\calF$.

There is a natural quasi-isomorphism  $\lambda:\mathsf{C}_{\rm AS}^\bullet\to \Omega^{\bullet}_\calF$ of sheaves of differential complexes on $Z$ as is explained in \cite[Lem. 1.5]{conmos}  by
\[
\lambda(f_0\otimes \cdots\otimes f_k)=f_0df_1\wedge\cdots \wedge df_k. 
\]
It is natural to check that $\lambda$ is $\sfG$-equivariant and therefore defines a quasi-isomorphism
\[
\lambda: \left(\Gamma(\sfG^{(p)}\fibprod{\tau_p}{\mu}Z, \beta_p^*(\mathsf{C}_{\rm AS}^q)), d_Z\right)\longrightarrow \left(\Gamma(\sfG^{(p)}\fibprod{\tau_p}{\mu}Z, \beta^*_p(\Omega^q_\calF)),d_{\calF}\right),
\]
inducing the desired isomorphism on cohomology.
\end{proof}

\begin{lemma}\label{cor:vanest-proper}When $\sfG$ acts on $Z$ properly, $H^\bullet_{\rm diff}(\sfG; \calF)$ is equal to $H^\bullet_\calF(Z)^\sfG$, the cohomology of the de Rham differential on the space of $\sfG$-invariant leafwise differential forms on $Z$ with respect to $\calF$. 
\end{lemma}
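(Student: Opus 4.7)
The plan is to mimic the spectral sequence arguments already used in Corollary \ref{cor:gpdcoh} and Proposition \ref{prop:alpha}, now applied to the bicomplex $C^{\bullet,\bullet}_{\rm diff}(\sfG;\calF)$ with its two commuting differentials $d_\sfG$ (coming from the simplicial structure of $\sfG$) and $d_\calF$ (the leafwise de Rham differential). I would filter the total complex by the $q$-degree and first compute cohomology in the $p$-direction. Concretely, for each fixed $q$ the complex
\[
\Gamma\bigl(\sfG^{(\bullet)}\fibprod{\tau}{\mu}Z,\;\beta^{*}\Omega^{q}_\calF\bigr)
\]
was already identified in the proof of Proposition \ref{prop:vanest} with the complex that computes the differentiable groupoid cohomology of the action groupoid $\sfG\ltimes Z\rightrightarrows Z$ with coefficients in the $\sfG$-equivariant fine sheaf $\Omega^{q}_\calF$.

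The key input is then properness: since $\sfG$ acts properly on $Z$, the action groupoid $\sfG\ltimes Z$ is a proper Lie groupoid. By \cite[\S 2.1]{crainic}, the differentiable groupoid cohomology of a proper Lie groupoid with coefficients in any fine (equivalently, soft) $\sfG$-sheaf is concentrated in degree zero and equals the space of invariant global sections. Concretely, the cut-off density of Definition \ref{cut-off} provides the averaging operator that produces the contracting homotopy. Applied here, this yields
\[
E_1^{0,q}=\Omega^{q}_\calF(Z)^{\sfG},\qquad E_1^{p,q}=0\ \text{for}\ p\geq 1.
\]

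The remaining differential on the $E_1$-page is the restriction of $d_\calF$ to $\sfG$-invariant leafwise forms, so
\[
E_2^{0,q}=H^{q}_{\calF}(Z)^{\sfG},\qquad E_2^{p,q}=0\ \text{for}\ p\geq 1,
\]
and the spectral sequence degenerates at $E_2$ for dimensional reasons. Reading off the total cohomology gives the claimed isomorphism $H^\bullet_{\rm diff}(\sfG;\calF)\cong H^\bullet_\calF(Z)^{\sfG}$.

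The only nontrivial step is the vanishing of the higher groupoid cohomology of the proper action groupoid with coefficients in the fine sheaf $\Omega^{q}_\calF$; everything else is formal bookkeeping with the two differentials. Since that vanishing is precisely the content of Crainic's theorem for proper Lie groupoids (and directly verifiable via the explicit averaging against the cut-off density from Definition \ref{cut-off}), no new technical difficulty arises beyond invoking it in the $\sfG$-equivariant sheaf setting already set up in the proof of Proposition \ref{prop:vanest}.
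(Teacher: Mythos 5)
Your proposal is correct and follows essentially the same route as the paper: filter $C^{\bullet,\bullet}_{\rm diff}(\sfG;\calF)$ by the $q$-degree, identify the rows with the differentiable cohomology of the proper action groupoid $\sfG\ltimes Z$ with coefficients in $\Omega^q_\calF$, and invoke Crainic's vanishing theorem to concentrate everything in $p=0$, leaving only $d_\calF$ on invariant forms. Your added remark that the cut-off density furnishes the explicit averaging homotopy is a correct elaboration of the same argument.
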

\begin{proof}
We apply the spectral sequence associated to the filtration of degree $q$ on the bicomplex $C^{\bullet,\bullet}_{\rm diff}(\sfG; \calF)$.  At the $E_1$ level, $E^{\bullet, q}_{1}$ is exactly the complex of differentiable cohomology of the groupoid $\sfG{_t\times_\mu}Z$ with coefficient in $\Omega^q_{\calF}(Z)$, the leafwise differential forms on $Z$ with respect to $\calF$. Since the  $\sfG$-action on $Z$ is proper, $\sfG{_t\times_\mu}Z$ is a proper groupoid. By \cite[Prop. 1]{crainic}, $E^{\bullet, q}_{1}$ is computed to be
\[
E^{0, q}_{1}=\Omega^q_\calF(Z)^\sfG, \qquad E^{p,q}_{1}=0, p\geq 1.
\]
Therefore, the spectral sequence degenerates at the $E_2$-level, and therefore  $H^\bullet_{\rm diff}(\sfG; \calF)$ and $H^\bullet_\calF(Z)^\sfG$ are isomorphic.
\end{proof}
 Composing the isomorphism $\alpha$ defined in Eq. (\ref{eq:alpha}) with the morphism 
$L$ defined in Eq. (\ref{eq:L}) and the isomorphism $\lambda$ in Prop. \ref{prop:vanest}, we  
define the following morphism of complexes:
\[
\Phi_Z:=\lambda\circ L\circ\alpha: C^\bullet_{\rm diff}(\sfG)\to \Tot^\bullet\left(C^{\bullet,\bullet}_{\rm diff}(\sfG;\calF)\right).
\]
We now have the following:
 \begin{theorem}\label{thm:vanest}
 Let $\sfG$ act on a manifold $Z$ such that the moment map $\mu:Z\to M$ is a surjective 
 submersion. When the action is proper, the morphism $\Phi_Z$, combined with the isomorphism of 
 Lemma \ref{cor:vanest-proper} induces a map
 \[
 \Phi_Z: H^\bullet_{\rm diff}(\sfG)\to H^\bullet_{\calF}(Z)^\sfG,
 \]
 which coincides with the van Est map of \cite{crainic}.
 \end{theorem}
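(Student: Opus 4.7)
The plan is to verify the identification of $\Phi_Z$ with Crainic's van Est map by unfolding the composition $\Phi_Z = \alpha \circ L \circ \lambda$ together with the spectral-sequence zig-zags of Propositions \ref{prop:alpha} and \ref{prop:vanest}, Corollary \ref{cor:gpdcoh}, and Lemma \ref{cor:vanest-proper}, and then matching the resulting cochain-level formula against the antisymmetrized-derivative formula that defines Crainic's map in \cite{crainic}.

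First I would recall Crainic's van Est construction in the form needed here: to a differentiable $p$-cocycle $\varphi \in C^p_{\rm diff}(\sfG)$ one assigns the element of $\Gamma\bigl(\bigwedge^p \sfA^*\bigr)$ given by the alternating sum of iterated right-invariant Lie derivatives of $\varphi$ evaluated at the unit section. In the present proper-action setting, composing with the anchor of the action algebroid $\sfA \ltimes Z \to T_\mu Z$ lands in $H^\bullet_\calF(Z)^\sfG$, and this is the image that $\Phi_Z$ must reproduce.

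Second, I would trace $\Phi_Z$ through the zig-zag. After $\alpha$, the cocycle $\varphi$ sits in bidegree $(p,0)$ of $C^{\bullet,\bullet}_{\rm diff}(\sfG;Z)$. The spectral-sequence collapse underlying Corollary \ref{cor:gpdcoh} relies on the contractibility of the proper groupoid $\sfG \ltimes Z \rightrightarrows Z$, for which the explicit contracting homotopy is averaging against the cut-off density of Definition \ref{cut-off}. Applying this homotopy successively pushes $\varphi$ through bidegrees $(p-1,1), (p-2,2), \ldots, (0,p)$, producing a $\sfG$-invariant Alexander--Spanier $p$-cochain on $Z$. After $L$ restricts to germs at the diagonal and $\lambda$ applies $f_0\otimes\cdots\otimes f_p \mapsto f_0\, df_1\wedge\cdots\wedge df_p$, one obtains a leafwise $p$-form on $Z$. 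Taylor-expanding the iterated averaging integrals along $\sfG^{\mu(z)}$ at the unit and contracting with the differentials produced by $\lambda$ then collapses to exactly the alternating Lie-derivative expression for Crainic's map pulled back to $Z$ via the action anchor.

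The main obstacle will be keeping the combinatorial signs and the zig-zag bookkeeping consistent, and verifying that the outcome is independent of the auxiliary choices (cut-off density, contracting homotopy). A cleaner route would exploit naturality: both $\Phi_Z$ and Crainic's map are natural with respect to equivariant open embeddings of proper $\sfG$-spaces, and both specialize in the case $Z = M$ with $\mu = \operatorname{id}_M$ to the original construction of \cite{crainic}; since a neighborhood of any orbit in a proper $\sfG$-manifold admits a slice model over which the computation reduces to the classical setting, naturality plus this base case would complete the identification.
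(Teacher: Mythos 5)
Your step 2 --- unwinding $\Phi_Z$ as a zig-zag in the double complex, with the contracting homotopy in the $\sfG$-direction given by averaging against the cut-off density, followed by taking germs at the diagonal and applying $f_0\otimes\cdots\otimes f_p\mapsto f_0\,df_1\wedge\cdots\wedge df_p$ --- is a correct description of the map, and it is in effect all the paper relies on: the theorem is stated without proof precisely because, at the cochain level, $\alpha$, $L$ and $\lambda$ compose to the tautological edge morphism $C^\bullet_{\rm diff}(\sfG)\to C^{\bullet,0}_{\rm diff}(\sfG;\calF)$, and Crainic's van Est map for a proper action is by definition this edge morphism followed by the inverse of the other, degenerate, edge supplied by Lemma \ref{cor:vanest-proper}.

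The genuine problem is the object you propose to compare against. Crainic's van Est map in the action setting is \emph{not} the algebroid-valued map $C^p_{\rm diff}(\sfG)\to\Gamma\bigl(\bigwedge^p\sfA^*\bigr)$ followed by ``the anchor of the action algebroid $\sfA\ltimes Z\to T_\mu Z$''. No such anchor exists: the anchor $\rho_Z$ of $\sfA\ltimes Z=\mu^*\sfA$ lands in $TZ$ and satisfies $T\mu\circ\rho_Z=\rho$, so its image is transverse to, not contained in, $T_\mu Z=\ker T\mu$; dually there is no natural map $\Gamma\bigl(\bigwedge^\bullet\sfA^*\bigr)\to\Omega^\bullet_\calF(Z)$. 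Worse, the map cannot factor through $H^\bullet(\sfA)$ at all: for $\sfG=\Z$ acting on $Z=\R$ by translations one has $\sfA=0$, hence $H^{\geq 1}(\sfA)=0$, while $\Phi_Z\colon H^1_{\rm diff}(\Z)=\R\to H^1_{\calF}(\R)^{\Z}\cong H^1_{dR}(S^1)=\R$ is an isomorphism. So the ``alternating Lie-derivative expression pulled back via the action anchor'' that your Taylor expansion is supposed to reproduce is not the map you must match. The same misidentification undermines your ``cleaner route'': the proposed base case $Z=M$, $\mu=\operatorname{id}_M$ has zero-dimensional foliation and only degree-zero invariant cohomology, so it does not recover Crainic's classical construction (that is the case $Z=\sfG$, $\mu=s$, where invariant leafwise forms restrict to $\Gamma\bigl(\bigwedge^\bullet\sfA^*\bigr)$ at the units); and naturality under equivariant open embeddings does not by itself control a map on global cohomology. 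The repair is to compare your zig-zag directly with Crainic's actual definition of the van Est map for an action, which is the same double-complex edge-morphism construction --- at which point the identification is essentially tautological rather than a Taylor-expansion computation.
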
 
 \begin{remark}\label{rmk:vainest}
 Theorem 3 of \cite{crainic} states that the van Est map $\Phi_Z$ is an isomorphism in degree 
 $\bullet\leq n$ and injective for $\bullet=n+1$, when the $\sfG$-action is proper and the fibers of 
 the moment map $\mu:Z\to M$ are homologically $n$-connected. Theorem \ref{thm:vanest} 
 above provides a natural framework to  generalize this van Est Theorem to the case of a general 
 Lie groupoid action. Indeed, by studying when the map 
 $L:H^\bullet_{\rm diff}(\sfG;Z)\to H^\bullet_{\rm loc}(\sfG;Z)$ is an isomorphism, we can obtain 
 similar conditions on the moment map $\mu:Z\to M$ for the van Est morphism $\Phi_Z$ to be an 
 isomorphism. 
 \end{remark}

\subsection{The invariant pseudodifferential calculus}
\label{sec_pseudo}

 Here we provide the construction of the algebra of invariant pseudodifferential 
 operators on a manifold $Z$ along the fibers of $\mu$, where $\sfG\rightrightarrows M$ is a Lie groupoid acting 
properly on $Z$ with moment map $\mu:Z  \to M$.
This pseudodifferential calculus 
 extends the one on Lie groupoids in \cite{nwx}, to which it reduces when $Z=G$, or, more 
 generally when the action is free: in this case, we have a principal $\sfG$-bundle, and we simply  
 pass to the Morita equivalent gauge groupoid. For actions of Lie groups, the invariant 
 pseudodifferential calculus was first constructed in \cite[\S 1]{cmL2}.
 
With notation from Section \ref{sec_properaction}, consider the space $\sym^m_\textup{inv}  (Z; \calF)  $ of \emph{invariant symbols over $Z$ along $\calF$ of order $m$}. By definition, this is the space of all $a\in \calC^\infty (T^*_\mu Z)$ such that the following conditions hold true:
\begin{enumerate}[(i)]
\item
  The function $a$ is $\sfG$-invariant in the sense that $ a (g \xi ) = a (\xi)$ for all $\xi \in T^*Z$ and $g \in G$ with
  $\mu(\pi(\xi)) = t(g)$. 
\item
  With respect to some local coordinates $x: U \rightarrow \R^n$ over some coordinate patch $U\subset Z$ 
  and the induced local coordinate system $(\sfz,\zeta ) : T^*U \rightarrow \R^{2n}$ there exists for all compact     
  $K\subset U$ and $\alpha , \beta \in \N^n$ a constant $C_{K,\alpha,\beta} > 0$ such that for all $ \xi \in T^*U$ with
  $\pi_{T^* Z} (\xi) \in K$ the estimate  
  \[
       \left| \partial_\sfz^\alpha \partial_{\zeta}^\beta a(\xi) \right| \leq C_{K,\alpha,\beta} 
      ( 1 + \| \zeta ( \xi )  \|^2)^{m/2}  
  \]
  holds true. 
\end{enumerate}
As usual, we put 
\[ 
   \sym^\infty_\textup{inv}  (Z; \calF) = \bigcup\limits_{m\in \Z} \sym^m_\textup{inv}  (Z; \calF) 
   \]
   and also 
   \[
   \sym^{-\infty}_\textup{inv}  (Z; \calF)  = \bigcap\limits_{m\in \Z} \sym^m_\textup{inv}  (Z; \calF)  \: .
\]
To define the calculus of (invariant) pseudodifferential operators along $\calF$ recall that we have fixed an invariant leafwise riemannian  metric $\eta$ on $T_\mu Z$. This enables us to construct a linear map 
$\Op : \sym^\infty _\textup{inv}  (Z; \calF) \rightarrow  
\Hom \big( \calC^\infty_\textup{$\sfG$-cpt} (Z) ,\calC^\infty_\textup{$\sfG$-cpt}  (Z)\big) $,
called the \emph{quantization map}, as follows. To this end note first that $\eta$ induces a connection 
on $T_\mu Z$ and an exponential function $\Exp : W \rightarrow Z$, where $W$ is an appropriate open neighborhood
of the zero section of $T_\mu Z$, and $\Exp X$ is the end point of the unique geodesic 
$\gamma: [0,1] \rightarrow L_z \subset Z$  within the leaf $L_z$ through the point $z = \pi_{T_\mu Z} (X)$
such that $\gamma (0) = z$ and $\dot\gamma(0) = X$. 
Now choose a smooth cut-off function $\chi :Z \times Z \rightarrow [0,1]$ which equals $1$ on a neighborhood of 
the diagonal, has support in a necessarily (larger) neighborhood of the diagonal over which $(\pi_{T_\mu Z},\Exp)^{-1}$ 
is defined, and satisfies $\chi(y,z) =\chi(z,y)$. Then let $\sfe: Z\times T^*_\mu Z \rightarrow \C$ be  the map
\[
   (y,\xi) \mapsto \chi (y,z(\xi)) e^{\sqrt{-1} \langle \xi , \Exp_{z(\xi)}^{-1} (y) \rangle} \quad 
   \text{where $z (\xi) = \pi_ {T^*_\mu Z} (\xi)$} \: .
\]
Given $a \in \sym^m_\textup{inv} (Z;\calF)$, $f\in \calC^\infty_\textup{cpt} (Z)$ and $z\in Z$, we now put
\begin{equation}
\label{quantization}
   \Op ( a) f (z) =  \int_{T_{\mu,z} Z} \int_{L_z} \sfe (y,z ) a(\xi) f(y)  \,  dy   \,  d\xi \: ,
\end{equation}
where, up to a factor $(2\pi \sqrt{-1})^{-r/2}$ with $r$ being the dimension of the leaves, 
integration with respect to $y \in L_z$ is over the natural volume density $d\nu$ induced by the riemannian metric $\eta_{|L_z}$, and integration over $  T_{\mu,z} Z$ is with respect to the natural translation invariant  measure given by
the positive symmetric bilinear form $\eta_z$.

Next, consider the fibered product $Z \fibprod{\mu}{\mu}  Z$ and note that $\sfG$ acts on it by the diagonal action.  
Given a smooth $\sfG$-invariant $\sfG$-compactly supported  function $k$ on $Z\fibprod{\mu}{\mu} Z$ we obtain an operator
\[
      P_k :  \calC^\infty_\textup{$\sfG$-cpt} (Z) \rightarrow  \calC^\infty_\textup{$\sfG$-cpt} (Z),
      \quad f \mapsto \Big( Z \ni z \mapsto \int_{L_z}  k(z,y)   f(y) dy \in \C \Big),
\]
where integration over $L_z$ is with respect to  the same volume density $d\nu$  as above. The space of operators
$P_k \in \Hom \big( \calC^\infty_\textup{$\sfG$-cpt} (Z) ,\calC^\infty_\textup{$\sfG$-cpt}  (Z)\big) $ obtained in this way
are called the \emph{invariant smoothing operators on $Z$ along $\calF$}.
 
By an \emph{invariant pseudodifferential operator over $Z$ along $\calF$ of order $m$} we now understand 
an operator $A\in \Hom \big( \calC^\infty_\textup{$\sfG$-cpt} (Z) ,\calC^\infty_\textup{$\sfG$-cpt}  (Z)\big) $, which up to an invariant smoothing operator coincides with $\Op (a)$ for some $a \in \sym^m_\textup{inv} (Z;\calF)$. 
Denote by $\Psi^m_\textup{inv}(Z;\calF)$ the space of such pseudodifferential operators, and put as usual
\[
  \Psi^\infty_\textup{inv} (Z;\calF):=\bigcup_{m\in\Z}\Psi^m_\textup{inv}(Z;\calF),\quad \text{and} \quad 
  \Psi^{-\infty}_\textup{inv} (Z;\calF):=\bigcap_{m\in\Z}\Psi^m_\textup{inv}(Z;\calF)\: .
 \]
Elements of $\Psi^m_\textup{inv}(Z;\calF)$ can now be regarded as families $\{P_x\}_{x\in M}$, where each $P_x$ is a 
pseudodifferential operator on $\mu^{-1}(x)$ of order $m$, satisfying:
 \begin{enumerate}[(i)]
 \item the family $\{P_x\}_{x\in M}$ is smooth in its dependence on $x\in M$,
 \item the family $\{P_x\}_{x\in M}$ is $\sfG$-invariant:
 \[
  P_{s(g)}=L_g\circ P_{t(g)}\circ L_{g}^{-1},
 \]
 where $L_g:C^\infty(\mu^{-1}(s(g)))\to C^\infty(\mu^{-1}(t(g)))$ is the pull-back along  
 diffeomorphism induced by the action of $g\in\sfG$,
 \item the support of $P$, defined as
 \[
   \operatorname{supp}(P):=\overline{\bigcup_{x\in M}\operatorname{supp}(P_x)}\subset Z \fibprod{\mu}{\mu} Z,
 \]
    is $\sfG$-compact.
 \end{enumerate}
 This implies that the invariant pseudodifferential operator calculus we defined coincides with the one 
 by \cite[\S 4]{paterson}.
 Moreover, it follows by the exposition above (cf.~also \cite[\S 4]{paterson}) that 
 $ \Psi^\infty_\textup{inv} (Z;\calF)$ forms a 
 filtered algebra containing the smoothing operators in $\Psi^{-\infty}_\textup{inv} (Z;\calF)$ as an  ideal.
 
 The quantization map $\Op$ has a quasi-inverse, the so-called \emph{symbol map} 
$\sigma : \Psi^\infty_\textup{inv} (Z;\calF) \rightarrow \sym^\infty_\textup{inv} (Z;\calF)$ which is given by 
\[
   \sigma (P) (\xi) := P\big( \sfe ( - , z(\xi) \big)  (z(\xi))  \quad \text{for $P\in \Psi^\infty_\textup{inv} (Z;\calF)$ and
   $\xi \in T^*_\mu Z$}. 
\]

 \section{The analytic index pairing}\label{sec:pairing}
 \subsection{The trace}
 In this section we shall construct a trace on the algebra $\Psi^{-\infty}_{\rm inv}(Z;\calF)$ of 
 smoothing operators on a proper cocompact $\sfG$-manifold $Z$, where $\sfG$ is a {\em unimodular} Lie 
 groupoid. Let us first recall the definition of unimodularity: as before $\sfG\rightrightarrows M$ is a 
 Lie groupoid. Consider the real line bundle $L:=\left|\bigwedge^{\rm top}\right|T^*M\otimes\left|\bigwedge^{\rm 
 top}\right|\sfA$ over $M$. As shown in \cite{elw}, $L$ carries a canonical representation of $\sfG$.
 \begin{definition}
 \label{unimodular}
 A Lie groupoid $\sfG\rightrightarrows M$ is {\em unimodular} if there exists a nonvanishing, $\sfG
 $-invariant section of $L$.
 \end{definition}
 When $\sfG$ is unimodular, we refer to such an invariant section as a ``volume form'', or 
 ``transversal density'', usually denoted by $\Omega$. From now in this section we assume $\sfG$ 
 to be unimodular and choose such a volume form.
 
 \begin{remark}
Recall, cf.\ \cite{gs}, that for a smooth submersion $f:X\to Y$, integration over the fiber defines a map
\[
f_*:\Gamma^\infty_c(X,\left|\textstyle\bigwedge^{\rm top}T^*X\right|)\to \Gamma_c^\infty(Y,\left|\textstyle\bigwedge^{\rm top}T^*Y\right|),\quad 
\alpha\mapsto\int_f\alpha.
\]
More precisely, we choose an Ehresmann connection on the fibers of $f$, i.e., a smooth 
isomorphism $T_xX\cong T_x(f^{-1}(y))\oplus (f^*TY)_x$, for $f(x)=y$. With this, we can decompose $\left|\bigwedge^{top}T_x^*X\right|
\cong \left|\bigwedge^{\rm top} T_x^*(f^{-1}(y))\right|\otimes f^*(\left|\bigwedge^{\rm top} T^*Y\right|)$, and integrate over 
the fibers of $f$. One checks that the resulting density is independent of the Ehresmann 
connection chosen.
\end{remark}

Given $\Omega\in\Gamma^\infty(M;L)$, choose a cut-off density $c$ on $Z$ adapted to the $\sfG$-action, and define the following functional on $\Psi^{-\infty}_{\rm inv}
(Z;\calF)$:
\begin{equation}
\label{def-trace}
\begin{split}
\tau_\Omega(K):=&\int_Z c(z) k(z,z)d\nu\wedge \mu^*\Omega\\
=&\int_M\left(\int_\mu c(z) k(z,z)d\nu\right)\Omega,\quad K\in \Psi^{-\infty}_{\rm inv} 
(Z;\calF).
\end{split}
\end{equation}
In this formula, we have used the duality between $\sfA$ and $\sfA^*$ to pair $c$ with $
\mu^*\Omega$. The resulting ``transverse density'' combines with the fiberwise density $d\nu$ to 
form a density on $Z$,  using  the choice of an Ehresmann connection on fibration defined by  the moment map $\mu: Z\to M$. The resulting integral over $Z$ is independent of this choice by the remark above. 

In the rest of this article, we will include $d\nu$ together with the Schwarz kernel function $k(z_1, z_2)$, and view $k(z_1, z_2)d\nu(z_2)$ as a fiberwise density on $Z\fibprod{\mu}{\mu}Z$ along the second $\mu$-fiber. By slight abuse of notation, we will denote it again by $k(z_1, z_2)$.    
\begin{proposition}
\label{trace_prop}
$\tau_\Omega$ does not depend on the choice of $c$ and defines a trace if $\Omega$ is $\sfG$-invariant.
\end{proposition}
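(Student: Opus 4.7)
My plan is to derive both assertions from the same averaging principle: for a proper $\sfG$-action and any $\sfG$-invariant top-degree density $\omega$ on $Z$ (with appropriate support), the integral $\int_Z c\cdot\omega$ is independent of the cut-off $c$.

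For cut-off independence of $\tau_\Omega(K)$, I would first verify that $k(z,z)\, d\nu \wedge \mu^*\Omega$ is a $\sfG$-invariant density on $Z$. The $\sfG$-invariance of $K$ gives $k(gz,gz)=k(z,z)$ under the natural identifications of leafwise density fibers; the leafwise volume $d\nu$ is $\sfG$-invariant because the leafwise metric $\eta$ was chosen $\sfG$-invariant; and $\mu^*\Omega$ is $\sfG$-invariant precisely because $\Omega$ is (this is where unimodularity enters). With invariance established, I would invoke the averaging principle: given two cut-offs $c_1,c_2$, insert $1=\int_{\sfG^{\mu(z)}}c_2(g^{-1}z)$ into $\tau_{\Omega,c_1}(K)$ and change variables via the $\sfG$-action to swap the roles of $c_1$ and $c_2$.

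For the trace property, unfolding the convolution of the kernels and relabeling dummy variables produces the identity
\[
\tau_\Omega(K_1 K_2)-\tau_\Omega(K_2 K_1)=\int_{Z\fibprod{\mu}{\mu}Z}\bigl(c(z)-c(y)\bigr)\, k_1(z,y)k_2(y,z)\, d\nu(z)\wedge d\nu(y)\wedge\mu^*\Omega.
\]
The product $F(z,y):=k_1(z,y)k_2(y,z)$ is $\sfG$-invariant for the diagonal $\sfG$-action on $Z\fibprod{\mu}{\mu}Z$, and $\omega:=d\nu(z)\wedge d\nu(y)\wedge\mu^*\Omega$ is $\sfG$-invariant as well. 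The cut-off normalization of Definition~\ref{cut-off} yields the crucial null-average
\[
\int_{\sfG^{\mu(z)}}\bigl(c(g^{-1}z)-c(g^{-1}y)\bigr)=0
\]
at every $(z,y)\in Z\fibprod{\mu}{\mu}Z$. Since the diagonal $\sfG$-action on $Z\fibprod{\mu}{\mu}Z$ is itself proper, a cut-off $c'$ exists there; inserting $1=\int_{\sfG^{\mu(z,y)}}c'(g^{-1}(z,y))$, changing variables $(z,y)\mapsto (gz,gy)$ using the $\sfG$-invariance of $F$ and $\omega$, and swapping orders of integration lands the $g$-integral on the $\phi(z,y):=c(z)-c(y)$ factor, where it vanishes by the null-average.

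The main technical obstacle I expect is the careful bookkeeping of the various density bundles — leafwise, transverse, and along source/target fibers of $\sfG$ — when performing the change of variables $(z,y)\mapsto(gz,gy)$. The necessary Jacobian cancellations are exactly encoded by the $\sfG$-invariance of the transversal density $\Omega$; without unimodularity, such a substitution would introduce a nontrivial modular character along $\sfG$-orbits and the averaging argument would fail.
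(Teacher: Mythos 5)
Your argument is correct. The cut-off--independence half is essentially the paper's own proof: insert the normalization $1=\int_{\sfG^{\mu(z)}}c'(g^{-1}z)$, change variables along the action using invariance of the kernel, of $d\nu$ and of $\Omega$, and apply the normalization to the other cut-off. For the trace property, however, you take a genuinely different route. The paper rewrites $\tau_\Omega(K_1K_2-K_2K_1)$ as $\int_M\int_{\sfG^{x}}\langle\varphi(g),\Omega\rangle$ for an explicit section $\varphi$ of $s^*|\bigwedge^{\rm top}\sfA^*|$ over $\sfG$ and proves $\varphi(g^{-1})=-\varphi(g)$, so the integral vanishes by antisymmetry under inversion (the invariance of $\Omega$ is what makes the integration over $\sfG$ inversion-invariant, a point the paper uses tacitly). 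You instead isolate the commutator defect as the factor $c(z)-c(y)$ on $Z\fibprod{\mu}{\mu}Z$, note that condition (ii) of Definition~\ref{cut-off} forces this factor to have zero average over every orbit of the diagonal action, and kill it by a second averaging against a cut-off for that (still proper) diagonal action. The two mechanisms differ --- inversion-antisymmetry on $\sfG$ versus a null orbit average on the fibered product --- but they consume the same inputs at the same places, and in particular both fail without unimodularity exactly as you say. Your version has the merit of subsuming both halves of the proposition under one averaging principle and of avoiding the substitution $g\mapsto g^{-1}$; its only extra cost, the auxiliary cut-off $c'$ on $Z\fibprod{\mu}{\mu}Z$, is negligible since one may simply take $c'(z,y):=c(z)$. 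The density bookkeeping you flag as the main obstacle is precisely the manipulation the paper itself performs in the cut-off--independence step, and it goes through as you predict.
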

\begin{proof}

To check the trace property, we write out
\begin{align*}
\tau_\Omega([K_1,K_2])&= \int_M\int_\mu\int_\mu c(z_1)(k_1(z_1,z_2)k_2(z_2,z_1)-k_2(z_1,z_2)k_1(z_2,z_1))\Omega\\
&=\int_M\int_{\sfG^{\mu(g)}}\left<\varphi(g),\Omega\right>,
\end{align*}
where $\varphi\in \Gamma^\infty(\sfG;s^*\left|\bigwedge^{\rm top}\sfA^*\right|)$ is defined as
\begin{align*}
\varphi(g):=\int_{\mu^{-1}(s(g))}\int_{\mu^{-1}(s(g))} c(z_1) c(g^{-1}z_2)\big(k_1(z_1,&z_2)k_2(z_2,z_1)\\
&-k_2(z_1,z_2)k_1(z_2,z_1)\big).
\end{align*}
Above, we have used the defining property \eqref{eq-cut-off} of Def. \ref{cut-off} of the cut-off function to go from the first to the second line above. Next,  we have
\begin{align*}
\varphi(g^{-1})&=\int_{z_1\in\mu^{-1}(t(g))\atop z_2\in\mu^{-1}(t(g))} c(z_1) c(gz_2)\big(k_1(z_1,z_2)k_2(z_2,z_1)\\
&\hspace{6cm}-k_2(z_1,z_2)k_1(z_2,z_1)\big)\\
&=\int_{z_1\in\mu^{-1}(t(g))\atop z_2\in\mu^{-1}(s(g))} c(z_1) c(z_2)\big(k_1(z_1,g^{-1}z_2)k_2(g^{-1}z_2,z_1)\\
&\hspace{6cm}-k_2(z_1,g^{-1}z_2)k_1(g^{-1}z_2,z_1)\big)\\
&=\int_{z_1\in\mu^{-1}(t(g))\atop z_2\in\mu^{-1}(s(g))} c(z_1) c(z_2)\big(k_1(gz_1,z_2)k_2(z_2,gz_1)
\\
&\hspace{6cm}-k_2(gz_1,z_2)k_1(z_2,gz_1)\big)\\
&=\int_{z_1\in\mu^{-1}(t(g))\atop z_2\in\mu^{-1}(t(g))} c(g^{-1}z_1) c(z_2)\big(k_1(z_1,z_2)k_2(z_2,z_1)\\
&\hspace{6cm}-k_2(z_1,z_2)k_1(z_2,z_1)\big)\\
&=-\varphi(g).
\end{align*}
In this computation we have twice used the change of variables given by the diffeomorphism $g:
\mu^{-1}(s(g))\to\mu^{-1}(t(g))$ to go to the second and the fourth line. To go to the third line, the $\sfG$-invariance of $k_1$ and $k_2$ was used. Finally, because of this property $\varphi(g^{-1})=-
\varphi(g)$, the integral over $\sfG$ vanishes, proving the trace property.

To prove that the trace does not depend on the choice of cut-off function, let $c'$ be another such 
function, and insert $1= \int_{\sfG^{\mu(z)}}c'(g^{-1}z)$ into the formula for $\tau_\Omega$ and 
change variables again, using the $\sfG$-invariance of the kernels $k$:
\[
\begin{split}
\tau_\Omega(k)&=\int_M\int_\mu c(z) k(z,z)\\
&=\int_M\int_\mu \int_{\sfG^{\mu(z)}}c'(g^{-1}z)c(z) k(z,z)\\
&=\int_M\int_\mu\int_{\sfG_{\mu(z)}}c'(z)c(gz)k(z,z)\\
&=\int_M\int_\mu c'(z)k(z,z).
\end{split}
\]
This shows that the trace does not depend on the choice of cut-off function $c$.
\end{proof}

\begin{proposition}
\label{trace-symbol}
For $a\in \sym^{-\infty}_{\rm inv}\left(Z;\calF \right)$:
\[
\tau_\Omega\left(\op(a)\right)=\int_{T^*_\mu Z} c(z) a(z,\xi)\mu^*\Omega.
\]
\end{proposition}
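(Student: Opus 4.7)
The plan is a direct computation: compute the Schwartz kernel of $\op(a)$ on the diagonal from the definition \eqref{quantization}, substitute into the trace formula \eqref{def-trace}, and apply Fubini.

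First, from \eqref{quantization} the Schwartz kernel of $\op(a)$ with respect to the leafwise volume density $d\nu$ is
\[
  k_{\op(a)}(z, y) = \int_{T^*_{\mu, z} Z} \sfe(y, \xi) \, a(\xi)\, d\xi .
\]
Restricting to the diagonal $y = z$ I use $\Exp_{z}^{-1}(z) = 0$ and $\chi(z,z) = 1$, so $\sfe(z,\xi) = 1$ for every $\xi \in T^*_{\mu,z}Z$. Since $a\in\sym^{-\infty}_\textup{inv}(Z;\calF)$ is rapidly decreasing in the fiber direction, the resulting integral converges absolutely, yielding
\[
  k_{\op(a)}(z,z) = \int_{T^*_{\mu,z} Z} a(z,\xi)\, d\xi .
\]

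Next, I insert this into the definition \eqref{def-trace} of the trace. Since $\sfG$ acts cocompactly and $c$ has compact support, the combined integrand is compactly supported in $z$ and rapidly decreasing in $\xi$, so
\[
  \tau_\Omega\bigl(\op(a)\bigr)
  = \int_Z c(z) \left( \int_{T^*_{\mu, z} Z} a(z,\xi)\, d\xi \right) d\nu \wedge \mu^*\Omega .
\]
Applying Fubini then collapses the iterated integral to a single integral over the total space of the bundle $\pi_{T^*_\mu Z}: T^*_\mu Z\to Z$:
\[
  \tau_\Omega\bigl(\op(a)\bigr) = \int_{T^*_\mu Z} c(z)\, a(z,\xi)\, \mu^*\Omega ,
\]
with the understanding that the fiber volume $d\xi$ combines with the leafwise volume $d\nu$ and the pulled-back transverse density $\mu^*\Omega$ (chosen using an Ehresmann connection for $\mu$, independent of this choice by the remark preceding the proposition) to form the natural density on $T^*_\mu Z$ appearing in the right-hand side.

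The argument is elementary once one has the kernel on the diagonal; the only real bookkeeping is matching up the three density factors (fiber, leafwise, transverse) with the single density implicitly meant by the right-hand side. I do not expect a serious obstacle: convergence follows from cocompactness plus the rapid symbol decay, and no algebraic or cohomological machinery is required.
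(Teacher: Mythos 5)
Your computation is correct and is precisely the argument the paper intends: its proof is the single sentence ``write out the kernel of $\Op(a)$ from \eqref{quantization} and substitute into \eqref{def-trace}'', and you have simply supplied the details (kernel on the diagonal via $\Exp_z^{-1}(z)=0$ and $\chi(z,z)=1$, convergence from rapid decay of the order $-\infty$ symbol, and the bookkeeping of the fiber, leafwise, and transverse density factors). No difference in approach.
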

\begin{proof}
The proof follows immediately by writing out the kernel of $\Op(a)$ defined by \eqref{quantization}, 
and substituting it in the definition \eqref{def-trace} of the trace $\tau_\Omega$.
\end{proof}

\subsection{The pairing}
In \cite{conmos} the higher index of elliptic operators on a manifold was defined by a 
pairing  with Alexander--Spanier cocycles representing cohomology classes of the underlying 
manifold.  In our set up of a proper groupoid action, we need a similar representation of the 
foliated invariant cohomology classes in $H^\bullet_\calF(Z)^\sfG$. The complex 
$C^\bullet_{\rm   diff}(Z)^\sfG$, with the differential \eqref{diff-def}, as well as its localization 
$C^\bullet_{\rm   loc}(Z)^\sfG$ to the unit space $M$ offers the right object to introduce the pairing. 
 
 In this form, cohomology classes naturally pair with the so-called localized $K$-theory of the 
algebra of smoothing operators $\Psi^{-\infty}_{\rm inv}(Z;\calF)$. The localization of the $K$-theory 
of this algebra to the diagonal $M$ is defined just as in \cite{ppt}, which in turn is based on 
the construction in \cite{mw} for manifolds. This results in an  abelian group denoted by 
$K_0^{\rm loc}(\Psi^{-\infty}_{\rm inv}(\sfG;Z))$.  Let $\Psi^{-\infty}_{\rm inv}(Z;\calF)^+$ be the unitalization of $\Psi^{-\infty}_{\rm inv}(Z;\calF)$. Classes of $K_0^{\rm loc}(\Psi^{-\infty}_{\rm inv}(\sfG;Z))$ are represented by pairs 
\[
(P,e)\in \Mat_\infty\left(\Psi^{-\infty}_{\rm inv}(Z;\calF)^+\right)\times \Mat_\infty(\C))
\]
with support in some open neighbourhood $U\subset Z_\mu^{(2)}$ of $Z$,
that are idempotents, i.e., $P^2=P$, $e^2=e$ and satisfy
\[
P-e\in\Mat_\infty\left(\Psi^{-\infty}_{\rm inv}(Z;\calF)\right).
\]
Two such pairs $(P_0,e_0)$ and $(P_1,e_1)$ represent the same class if one can find a 
homotopy 
of idempotents $(P_t,e_t),~t\in[0,1]$ with support in $U$. With this, the localized $K$-theory 
$K_0^{\rm loc}(\Psi^{-\infty}_{\rm inv}(Z;\calF))$ is defined as the direct limit over open sets $U$ 
localizing to the diagonal.
By construction, the localized $K$-theory comes equipped with a canonical morphism 
\begin{equation}
\label{forgetful}
K_0^{\rm loc}(\Psi^{-\infty}_{\rm inv}(Z;\calF))\to K_0(\Psi^{-\infty}_{\rm inv}(Z;\calF))
\end{equation}
to the usual $K$-theory group by forgetting about the support of idempotents.

Next, given a cocycle $\varphi\in C^{2k}_{\rm  diff}(Z)^\sfG$, we define
 \begin{align}
 \label{pairing}
\notag \left<\varphi,(P,e)\right>:=
 \int_{Z^{(2k+1)}_\mu}c(z_0)\varphi(z_0&,\ldots,z_{2k})k_P(z_0,z_1)\cdots k_P(z_{2k},z_0)\mu_{(2k)}^*\Omega\\
 &-\int_{Z^{(2k+1)}_\mu}c(z_0)\varphi(z_0,\ldots,z_{2k})\tr(e)\mu_{(2k)}^*\Omega
 \end{align}
 In this formula, $k_P$ denotes the kernel of the matrix trace of $P$.
 \begin{remark}
 \label{rk-K}
 The pairing above can be written in terms of the natural pairing between cyclic homology and 
 cohomology of the algebra $\Psi^{-\infty}_{\rm inv}(Z;\calF)$;
 \[
  \left<\varphi,(P,e)\right>=\left<\chi_\Omega(\varphi),\Ch(P,e)\right>,
 \]
 where $\Ch:K_0(\Psi^{-\infty}_{\rm inv}(Z;\calF))\to HC_{\rm ev}(\Psi^{-\infty}_{\rm inv}(Z;\calF))$ is 
 the noncommutative Chern character to cyclic homology and $\chi_\Omega(\varphi)$ is a natural 
 cyclic cocycle defined using the trace $\tau_\Omega$, analogous to \cite[\S 1.3]{ppt}. 
 \end{remark}
  \begin{proposition}
 \label{index-pairing}
 Combined with the map \eqref{eq:alpha} and the isomorphism of Corollary \ref{cor:gpdcoh}, equation \eqref{pairing} defines a pairing
 \[
 \left<~,~\right>:H^{\rm ev}_{\rm diff}(\sfG)\times K_0(\Psi^{-\infty}_{\rm inv}(Z;\calF))\to\C.
 \]
 \end{proposition}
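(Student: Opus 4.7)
My plan is to verify that the formula \eqref{pairing}, defined a priori on cocycles $\varphi\in C^{2k}_{\rm diff}(Z)^\sfG$ and localized idempotent representatives $(P,e)$, descends to a well-defined pairing on $H^{2k}_{\rm diff}(\sfG)\times K_0(\Psi^{-\infty}_{\rm inv}(Z;\calF))$. Three things need to be checked: absolute convergence of the integrals, independence of the choice of cocycle representative $\varphi$ (using the chain of isomorphisms from Corollary \ref{cor:gpdcoh} and Proposition \ref{prop:alpha}), and independence of the idempotent representative $(P,e)$. Convergence is immediate from the cocompact properness of the action: the cut-off density $c$ confines $z_0$ to a compact transversal, while the $\sfG$-compactness of the support of $k_P$ on $Z\fibprod{\mu}{\mu}Z$, together with the smoothness of $k_P$, keeps the iterated integration over $z_1,\ldots,z_{2k}$ in a bounded region of the corresponding leaf.

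The most conceptual approach, indicated by Remark \ref{rk-K}, is to promote the cochain $\varphi$ to a cyclic cocycle $\chi_\Omega(\varphi)$ on the algebra $\Psi^{-\infty}_{\rm inv}(Z;\calF)$ using the trace $\tau_\Omega$ from \eqref{def-trace}, defined on elementary tensors $K_0\otimes\cdots\otimes K_{2k}$ by
\[
\chi_\Omega(\varphi)(K_0,\ldots,K_{2k}):=\int_{Z^{(2k+1)}_\mu}c(z_0)\varphi(z_0,\ldots,z_{2k})k_0(z_0,z_1)\cdots k_{2k}(z_{2k},z_0)\,\mu_{(2k)}^*\Omega,
\]
following the model of \cite[\S 1.3]{ppt}. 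Then \eqref{pairing} is precisely the Connes pairing $\left<\chi_\Omega(\varphi),\Ch(P,e)\right>$. Once $\chi_\Omega$ is shown to land in cyclic cocycles and to carry the simplicial differential of \eqref{diff-def} to $(b,B)$-coboundaries, independence of the $K$-theory representative (including homotopy invariance under $(P_t,e_t)$) follows from the standard theory of the Chern character $K_0\to HC_{\rm ev}$, and this simultaneously yields the extension of the pairing from localized idempotents to all classes in $K_0(\Psi^{-\infty}_{\rm inv}(Z;\calF))$, since $\chi_\Omega(\varphi)$ is a cyclic cocycle on the full nonlocalized algebra.

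The central technical step, and what I expect to be the main obstacle, is proving the compatibility $\chi_\Omega\circ d=(b+B)\circ\chi_\Omega$ at the cochain level. This is a groupoid-equivariant generalization of the trace calculation in Proposition \ref{trace_prop}: for each omitted index $i$ in the alternating sum \eqref{diff-def} one rewrites the resulting integrand by combining cyclic rotations of the kernel product $k_0(z_0,z_1)\cdots k_{2k+1}(z_{2k+1},z_0)$ with the $\sfG$-invariance of each $k_j$ and the defining cut-off identity $\int_{\sfG^{\mu(z)}}c(g^{-1}z)=1$, collecting the terms into the Hochschild boundary $b$ and Connes' operator $B$ applied to $\chi_\Omega(\varphi)$. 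The bookkeeping is nontrivial, but it is structurally forced by the comparison between the Alexander--Spanier-type complex $C^\bullet_{\rm diff}(Z)^\sfG$ on $Z$ and the $(b,B)$-bicomplex of $\Psi^{-\infty}_{\rm inv}(Z;\calF)$, and it is precisely where the proper cocompactness of the action delivers the well-definedness of the pairing.
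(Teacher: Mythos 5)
Your proposal is correct and follows essentially the same route as the paper: the paper likewise defines the multilinear functional on elementary tensors $k_0\otimes\cdots\otimes k_{2n}$, identifies the differential \eqref{diff-def} with the Alexander--Spanier differential along the fibers of $\mu$, and proves the adjointness relation $\left<d\varphi,k_0\otimes\cdots\otimes k_{2n}\right>=\left<\varphi,b(k_0\otimes\cdots\otimes k_{2n})\right>$ with the Hochschild boundary $b$ as in Connes--Moscovici, concluding via the idempotency of $k_P$. Your cyclic-cocycle packaging via $\chi_\Omega$ and the Chern character is exactly the reformulation the paper itself records in Remark \ref{rk-K}, so there is no substantive difference in approach.
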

 \begin{proof}
 For $k_0,\ldots,k_{2n}\in\Psi^{-\infty}_{\rm inv}(Z;\calF)$ and $\varphi\in C^{2n}_{\rm  diff}(Z)^\sfG$,
 define the pairing
 \begin{align*}
 \left<\varphi,k_0\otimes\ldots\otimes k_{2n}\right>:=\int_{Z^{(2n+1)}_
 \mu}c(z_0)\varphi(z_0,\ldots,z_{2n}&)k_1(z_0,z_1)\cdots\\
 &\cdots k_{2n}(z_{2n},z_0)\mu_{(2n+1)}^*\Omega.
 \end{align*}
 Since the differential \eqref{diff-def} of the complex $C^{\bullet}_{\rm  diff}(Z)^\sfG$ can just be 
 identified with the Alexander--Spanier differential along the fibers of $\mu$, one proves, as in 
 \cite[Lemma 2.1.]{conmos}, by a straightforward computation that 
 \[
  \left<d\varphi,k_0\otimes\ldots\otimes k_{2n}\right>= \left<\varphi,b(k_0\otimes\ldots\otimes k_{2n})\right>,
 \]
 where $b$ is the Hochschild boundary. Since in the formula \eqref{pairing} of the pairing, the 
 kernels $k_P$ are idempotents in a matrix algebra over $\Psi^{-\infty}_{\rm inv}(Z;\calF)$, the result 
 now follows.
 \end{proof}
 Next, we localize the pairing: to the unit space $M$. Since 
 $K_0^{\rm loc}(\Psi^{-\infty}_{\rm inv}(Z;\calF))$ is by definition an inverse limit of 
 $K$-theory groups with support in a neighbourhood of $M$, and $C^{\bullet}_{\rm  loc}(Z)^\sfG$ is 
 the direct limit, we have:
 \begin{corollary}
 The pairing \eqref{pairing} localizes to a pairing 
 \[
\left<\ ,\ \right>_{\rm loc}:\  H^{\rm ev}_\calF(Z)^\sfG\times K_0^{\rm loc}(\Psi^{-\infty}_{\rm inv}(Z;\calF))\to\C,
 \]
 compatible with the forgetful map \eqref{forgetful} and the van Est map.
 \end{corollary}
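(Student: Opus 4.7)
The plan is to show that the pairing formula \eqref{pairing} is intrinsically local at the diagonal, so that it factors through germs and thus descends to localized $K$-theory. The key initial observation is that if a class in $K_0^{\rm loc}(\Psi^{-\infty}_{\rm inv}(Z;\calF))$ is represented by a pair $(P,e)$ with support in a neighborhood $U\subset Z^{(2)}_\mu$ of the diagonal, then the iterated kernel $k_P(z_0,z_1)\cdots k_P(z_{2k},z_0)$ appearing in \eqref{pairing} is supported in the corresponding iterated neighborhood of the diagonal in $Z^{(2k+1)}_\mu$. Hence the integrand depends only on the germ of $\varphi$ at this diagonal, i.e. on the image of $\varphi$ in $C^{2k}_{\rm loc}(Z)^\sfG$.

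Next I would check well-definedness at the level of cohomology and $K$-theory classes. The Hochschild-boundary identity used in the proof of Proposition \ref{index-pairing} is a local computation at the diagonal, so it descends verbatim to germs, making the pairing depend only on the class in $H^{\rm ev}_{\rm loc}(\sfG;Z)$ and on the homotopy class of the $U$-supported idempotent, uniformly in the direct limit as $U$ shrinks. Composing with the isomorphism $\lambda$ of Proposition \ref{prop:vanest} and the identification of Lemma \ref{cor:vanest-proper} then delivers the pairing with values in $H^{\rm ev}_\calF(Z)^\sfG$.

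The two compatibility statements are then checked essentially by inspection. Compatibility with the forgetful map $K_0^{\rm loc}\to K_0$ is immediate: a $U$-supported representative also represents a class in the unlocalized $K_0$, and the same integral \eqref{pairing} computes both sides. Compatibility with the van Est map $\Phi_Z=\alpha\circ L\circ\lambda$ from Theorem \ref{thm:vanest} reduces to checking that each of $\alpha$, $L$, and $\lambda$ intertwines the pairings at the cochain level: $\alpha$ is pullback along the forgetful map $N^{\bullet,\bullet}(\sfG;Z)\to\sfG^{(\bullet)}$, which is just substitution into \eqref{pairing}; $L$ is the taking of germs, which is the localization already performed; and $\lambda$ is the Alexander--Spanier-to-de-Rham map of \cite[Lem.~1.5]{conmos}, whose compatibility with the Hochschild structure is the same identity already invoked in Proposition \ref{index-pairing}.

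The mildly tedious step is the last one: tracking the chain-level morphisms $\alpha$, $L$, and $\lambda$ through the explicit formula \eqref{pairing} and verifying commutativity on the nose, modulo Hochschild coboundaries. This is an equivariant, groupoid-theoretic refinement of the classical Alexander--Spanier-to-de-Rham compatibility of \cite{conmos} and presents no conceptual difficulty beyond the identifications already assembled in Section \ref{vanest}.
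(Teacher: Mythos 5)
Your proposal is correct and follows essentially the same route as the paper: the paper's (much terser) proof likewise observes that the localized $K$-theory and the localized cochain complex are limits over the same system of neighbourhoods of the diagonal, so that the support condition on the idempotent makes the pairing \eqref{pairing} depend only on the germ of $\varphi$, and then invokes the identifications of Section \ref{vanest} (Proposition \ref{prop:vanest} and Lemma \ref{cor:vanest-proper}) to land in $H^{\rm ev}_\calF(Z)^\sfG$. Your additional chain-level verification of compatibility with $\alpha$, $L$, and $\lambda$ is a more explicit spelling-out of what the paper leaves implicit, not a different argument.
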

 \begin{proof}
 Since 
 $K_0^{\rm loc}(\Psi^{-\infty}_{\rm inv}(Z;\calF))$ is by definition a direct limit of 
 $K$-theory groups with support in a neighbourhood of $M$, and $C^{\bullet}_{\rm  loc}(Z)^\sfG$ is 
 the projective limit of groups over such neighbourhoods, the pairing indeed localizes. 
 By the results of Section \ref{vanest}, the cohomology of $C^{\bullet}_{\rm  loc}(Z)^\sfG$ is equal to $H^\bullet_\calF(Z)^\sfG$, and the result follows.
 \end{proof}
 \begin{remark}\label{rmk:nonunimodular} We have focused on the construction of the (localized) 
 index pairing in the case of a unimodular Lie groupoid $\sfG$. The pairing can be extended to 
 the case of  general, i.e., nonunimodular, Lie groupoids by replacing $H^{\rm ev}_{\rm diff}(G)$ 
 and $H^{\rm ev}_{\calF}(Z)^\sfG$ by $H^{\rm ev}_{\rm diff}(G; L)$ and $H^{\rm ev}_{\calF}(Z; 
 \mu^*L)^\sfG$, i.e. 
 \begin{equation}
 \label{tw-pairing}
\left<\ ,\ \right>_{\rm loc}: H^{\rm ev}_\calF(Z; \mu^*L)^\sfG\times K_0^{\rm loc}(\Psi^{-\infty}_{\rm inv}(Z;
\calF))\to\C.
 \end{equation}
 Indeed the invariant volume form $\Omega$ defines a class in $H^0_{\rm diff}(\sfG;L)$ and using the 
 $H^\bullet_{\rm diff}(G)$-module structure on $H^\bullet_{\rm diff}(G;L)$, c.f.\ \cite{crainic}, one can view the product $\varphi\cdot\Omega$ in \eqref{pairing} as an element in $H^{2k}_{\rm diff}(\sfG;L)$. With this, one easily observes that the same formula defines a pairing as above for general elements in $H^{\rm ev}_{\rm diff}(\sfG;L)$, and, respectively $H^{\rm ev}_{\calF}(Z; 
 \mu^*L)^\sfG$.
  \end{remark}
 Finally, consider a $\sfG$-invariant family of elliptic differential operators along the fibers of 
 $\mu$. Using the $\sfG$-invariant pseudodifferential calculus of section \ref{sec_pseudo}, one constructs the 
 index class $\Ind(D)\in K_0(\Psi^{-\infty}_{\rm inv}(Z;\calF))$ in the standard way. In fact, we can 
 localize the support of this $K$-theory class in an arbitrary small neighbourhood of $Z\subset Z\fibprod{\mu}{\mu}Z$, so as
 to obtain, just as in \cite[Prop. 3.5.]{ppt} a {\em localized } index class $\Ind_{\rm loc}(D)\in K_0^{\rm loc}(\Psi^{-\infty}_{\rm inv}
 (Z;\calF))$.
 
 \section{The index theorem}\label{sec:index}
 In this section we prove the following index theorem:
 \begin{theorem}
 \label{index-thm}
 Let $\sfG$ be a Lie groupoid acting properly and cocompactly on a manifold $Z$.
Suppose that $D$ is an elliptic $\sfG$-invariant differential operator on $Z$, and $\alpha\in 
H^{2k}_{\calF}(Z;\mu^*L)^{\sfG}$. The index pairing \eqref{tw-pairing} evaluated on these elements is 
given by
 \[
 \left<\alpha,\Ind_{\rm loc}(D)\right>_{\rm loc}:=\frac{1}{(2\pi\sqrt{-1})^k}\int_{T^*_\mu Z}\pi^*\left<c,\alpha\right>\wedge \hat{A}(\calF^*)\wedge\ch(\sigma(D)).
 \]
 \end{theorem}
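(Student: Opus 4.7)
The plan is to follow the algebraic index theory strategy developed in \cite{ppt}, now adapted to the proper cocompact $\sfG$-action on $Z$ rather than the longitudinal setting on $\sfG$ itself. The key observation is that the invariant pseudodifferential calculus $\Psi^\infty_{\rm inv}(Z;\calF)$ of Section \ref{sec_pseudo} is an asymptotic deformation quantization of the Poisson algebra of $\sfG$-invariant functions on $T^*_\mu Z$, where the Poisson structure is the leafwise symplectic one on each fiber $T^*L_z$ of the foliation $\calF$. Using the invariant leafwise Riemannian metric $\eta$ together with a $\sfG$-invariant leafwise connection on $T_\mu Z$ (built from $\eta$ by averaging against the cut-off density $c$), one carries out a $\sfG$-equivariant Fedosov construction fiberwise to realize this deformation quantization formally in $\hbar$.

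First I would localize the computation to a neighborhood of the diagonal. By the corollary following Proposition \ref{index-pairing}, the pairing $\langle \alpha,\Ind_{\rm loc}(D)\rangle_{\rm loc}$ can be computed by pairing a representative of $\alpha$ in $C^\bullet_{\rm loc}(Z)^\sfG$ with a representative of the localized index class whose support lies in an arbitrarily small neighborhood of $Z\subset Z\fibprod{\mu}{\mu}Z$. By Proposition \ref{prop:vanest} and Lemma \ref{cor:vanest-proper}, the quasi-isomorphism $\lambda$ allows us to replace the Alexander--Spanier representative by a genuine $\sfG$-invariant closed leafwise differential form on $Z$. This reformulates the original pairing as the pairing of such a form with the image of $\Ind_{\rm loc}(D)$ in the cyclic homology of the deformed algebra, described via the cyclic cocycle $\chi_\Omega(\varphi)$ of Remark \ref{rk-K}.

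Next, I would apply the algebraic index theorem of Fedosov--Nest--Tsygan to the family of symplectic fibers $T^*L_z$ in the $\sfG$-equivariant and foliated setting. For a single symplectic manifold, this theorem expresses the image, under the canonical trace, of the Chern character of an idempotent in the deformed algebra as an integral of the $\hat A$-class of the manifold against the Chern character of the symbol. Applied leafwise to the family $T^*_\mu Z\to Z$, it identifies the twisted cyclic cocycle paired with a closed leafwise form $\alpha$ with the integral over $T^*_\mu Z$ of $\pi^*\alpha \wedge \hat A(\calF^*)\wedge \ch(\sigma(D))$. The trace formula of Proposition \ref{trace-symbol} introduces the cut-off density $c$ into the integrand, producing the required pairing $\langle c,\alpha\rangle$ that cancels the $\sfG$-redundancy and makes the integral compactly supported; the prefactor $1/(2\pi\sqrt{-1})^k$ arises from the normalization of the symplectic volume in the fiberwise Fedosov trace, matching the normalization already chosen for $\Op$ in \eqref{quantization}.

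The main obstacle will be executing the algebraic index theorem $\sfG$-equivariantly along the foliation and correctly identifying the characteristic classes. Concretely, one must construct a $\sfG$-invariant Fedosov connection on $T^*_\mu Z$ whose curvature represents the leafwise symplectic form, verify that its characteristic class produces $\hat A(\calF^*)$, and then match the trace/cocycle combinatorics of \eqref{pairing} with the $(2k)$-dimensional component of $\hat A(\calF^*)\wedge\ch(\sigma(D))$ for every $\alpha\in H^{2k}_\calF(Z;\mu^*L)^\sfG$. The $L$-twisting in the nonunimodular case (Remark \ref{rmk:nonunimodular}) needs to be tracked throughout, using that $\Omega$ defines a class in $H^0_{\rm diff}(\sfG;L)$ so that $\varphi\Omega$ paired against $\Ind_{\rm loc}(D)$ fits into the localized pairing \eqref{tw-pairing}; the invariance of the final integral with respect to the choice of cut-off then follows from the trace property established in Proposition \ref{trace_prop}.
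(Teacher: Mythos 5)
Your proposal follows essentially the same route as the paper: realizing $\Psi^\infty_{\rm inv}(Z;\calF)$ as an asymptotic ($\hbar$-dependent) deformation quantization of $T^*_\mu Z$, comparing its trace with the one coming from a $\sfG$-equivariant Fedosov quantization and its cyclic trace density, invoking the equivariant algebraic index theorem for the regular Poisson structure on $T^*_\mu Z$, and using the localization/van Est identifications together with the cut-off density to reduce to the cohomological formula. The only ingredient you leave implicit is the explicit scaling limit $\hbar\to 0$ of $\Op_\hbar(a)$ used in the paper's final computation, but this is a matter of detail rather than a difference of method.
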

 \begin{remark}
 In this index formula, $L$ is the line bundle $\wedge^{\rm top}T^*M\otimes \wedge^{\rm top}A$ over M; $\hat{A}(\calF^*)$ is the $\hat{A}$-genus of the foliation $\calF^*$, as will be 
 introduced below, and $\ch(\sigma(D))$ is the foliated Chern character of the symbol of $D$.
 These characteristic classes live in $H^\bullet_{\calF^*}(T^*_\mu Z)^\sfG$. The term $\left<c,\alpha\right>$ means that we use the pairing between $\sfA$ and $\sfA^*$ to obtain a transverse density
 in $\mu^*\left|\textstyle\bigwedge^{\rm top}T^*M\right|$. After wedging with the classes in $H^\bullet_{\calF^*}(T^*_\mu Z)^\sfG$ we obtain a compactly supported differential form on $Z$ that can be integrated. 
 \end{remark}
As an immediate corollary we observe that if the foliated cohomology class lies in the image of the 
van Est map, the pairing only depends on the ``global'' index class in $K_0\left(\Psi^{-\infty}_{\rm inv}(Z;\calF)\right)$, so that we have:
\begin{corollary}
In the situation above, let $\nu\in H^{2k}_{\rm diff}(\sfG;L)$. Then we have
\[
 \left<\nu,\Ind(D)\right>:=\frac{1}{(2\pi\sqrt{-1})^k}\int_{T^*_\mu Z}\pi^*\left<c,\Phi_Z(\nu)\right>\wedge \hat{A}(\calF^*)\wedge\ch(\sigma(D)).
\]
\end{corollary}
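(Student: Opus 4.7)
The plan is to compute the left-hand side of the claimed formula by reducing it, through a foliated deformation quantization of the cotangent bundle $T^*_\mu Z$, to an algebraic index problem whose index density has an explicit Fedosov-type expression in terms of $\hat A(\calF^*)$ and $\ch(\sigma(D))$. This is the groupoid-equivariant generalization of the strategy carried out in \cite{ppt}.

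First, I would set up the localized symbol calculus. Restricted to formal germs along the diagonal in $Z\fibprod{\mu}{\mu}Z$, the invariant pseudodifferential calculus $\Psi^\infty_{\rm inv}(Z;\calF)$ is a formal deformation quantization of $\calC^\infty(T^*_\mu Z)$, with deformation parameter $\hbar = (2\pi\sqrt{-1})^{-1}$ and star product coming from the symbolic composition built into \eqref{quantization}. The canonical symplectic structures on the cotangent fibers $T^*L_z$ of the foliation $\calF$ assemble into a leafwise symplectic structure along the lifted foliation $\calF^*$ on $T^*_\mu Z$, and the star product is a leafwise Fedosov product along $\calF^*$. Under the symbol map, the localized class $\Ind_{\rm loc}(D)$, which is built from an elliptic parametrix supported arbitrarily close to the diagonal, lifts to a $K$-theory class of the quantized algebra whose image in $K_0\bigl(\calC^\infty(T^*_\mu Z)\bigr)$ is the principal symbol class with Chern character $\ch(\sigma(D))$.

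Second, I would translate the cohomological pairing \eqref{pairing} into a cyclic pairing on the deformed symbol algebra. By Remark \ref{rk-K} we may rewrite
\[
\bigl\langle \alpha,\Ind_{\rm loc}(D)\bigr\rangle_{\rm loc} = \bigl\langle \chi_\Omega(\alpha),\Ch(\Ind_{\rm loc}(D))\bigr\rangle,
\]
where $\chi_\Omega(\alpha)$ is the cyclic cocycle on $\Psi^{-\infty}_{\rm inv}(Z;\calF)$ assembled from the trace $\tau_\Omega$ and a localized representative of $\alpha$. Applying the Alexander--Spanier to de Rham comparison $\lambda$ from Proposition \ref{prop:vanest} together with the symbol-integral formula of Proposition \ref{trace-symbol} identifies this cyclic cocycle, modulo coboundaries, with integration of leafwise differential forms on $T^*_\mu Z$ against the $\sfG$-invariant transverse density $\pi^*\langle c,\alpha\rangle\wedge\mu^*\Omega$. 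The $\sfG$-equivariance is absorbed into the cut-off $c$, and Proposition \ref{trace_prop} guarantees that the resulting pairing does not depend on $c$.

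Third, I would invoke the algebraic index theorem for foliated Fedosov products along $\calF^*$. In the form required here, it asserts that the Chern character of a $K$-theory class in the quantized algebra, paired with the canonical trace, computes the integral of the characteristic density $\hat A(\calF^*)\wedge\ch(\sigma(D))$ against the chosen transverse density. Combining this with Step~2 and absorbing the $\hbar$-factors into $(2\pi\sqrt{-1})^{-k}$ yields the asserted formula. The hardest part will be Step~3: one must check that the symbol calculus of $\Psi^\infty_{\rm inv}(Z;\calF)$ is genuinely a Fedosov quantization along $\calF^*$ whose trace densities are computable in closed form, and that the foliated algebraic index cocycle matches, via the van Est and Alexander--Spanier identifications of Section \ref{vanest}, with the cyclic cocycle built from $\tau_\Omega$ and $\alpha$. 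Once this compatibility is in place, independence of the cut-off and of the choice of representative of $\alpha$ follows by arguments parallel to those in Proposition \ref{trace_prop}.
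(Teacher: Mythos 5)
Your proposal has a genuine gap: it is an outline of a proof of Theorem \ref{index-thm} (the formula for the \emph{localized} pairing $\left<\alpha,\Ind_{\rm loc}(D)\right>_{\rm loc}$), not of the corollary. The content of the corollary is precisely the passage from the localized statement to the global one: for $\nu\in H^{2k}_{\rm diff}(\sfG;L)$ one must know that
\[
\left<\nu,\Ind(D)\right>=\left<\Phi_Z(\nu),\Ind_{\rm loc}(D)\right>_{\rm loc},
\]
i.e.\ that the pairing of a differentiable groupoid cohomology class with the global index class in $K_0(\Psi^{-\infty}_{\rm inv}(Z;\calF))$ is computed by the localized pairing of its van Est image with $\Ind_{\rm loc}(D)$. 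In your Step~2 you silently replace $\left<\nu,\Ind(D)\right>$ by $\left<\alpha,\Ind_{\rm loc}(D)\right>_{\rm loc}$ without justifying this identification, and without saying how $\alpha$ is obtained from $\nu$. In the paper this is supplied by the corollary at the end of Section \ref{sec:pairing}: the pairing localizes compatibly with the forgetful map \eqref{forgetful} and with the van Est map, because $K_0^{\rm loc}$ is a limit of $K$-groups over neighbourhoods of the diagonal while $C^\bullet_{\rm loc}(Z)^\sfG$ is the corresponding limit of cochain groups, and $\Ind_{\rm loc}(D)$ is sent to $\Ind(D)$ under \eqref{forgetful}. Once that compatibility is in hand, the corollary is literally immediate from Theorem \ref{index-thm} applied to $\alpha=\Phi_Z(\nu)$; no new deformation-quantization argument is needed. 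Your Steps~1--3 essentially reproduce the paper's own proof of Theorem \ref{index-thm} (asymptotic symbol calculus, comparison of traces, algebraic index theorem), so they are not following a different route --- they re-prove a statement already available and omit the one step the corollary actually requires.

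A smaller inaccuracy: the deformation parameter is not $\hbar=(2\pi\sqrt{-1})^{-1}$. In the paper $\hbar$ is the asymptotic scaling parameter $\xi\mapsto\hbar\xi$ in the symbol calculus, and the index is computed in the limit $\hbar\to 0$; the factor $(2\pi\sqrt{-1})^{-k}$ enters only through the normalization operator $R$ of Corollary \ref{cor-chern}, which compares the noncommutative and commutative Chern characters.
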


 \subsection{The algebraic index theorem}
 Given the set-up as above, where the Lie groupoid $\sfG\rightrightarrows M$ acts on $Z$ with 
 moment map $\mu:Z\to M$, consider $P:=T^*_\mu Z$, the cotangent bundle along the fibers of 
 $\mu$. With the fiberwise cotangent bundle structure, it is easy to see that this manifold carries a 
 canonical regular Poisson structure $\Pi$ whose symplectic leaves are exactly the cotangent 
 bundles of the fibers of $\mu$, i.e., $\calF^*$, where $\calF=\ker (T\mu)$ defines the foliation of
  $Z$ by the fibers of $\mu$. The groupoid $\sfG$ naturally acts on $T^*_\mu Z$, and the Poisson structure is invariant for this action.
 
 The Fedosov construction is a well-known method to construct a formal deformation quantization
 of a symplectic manifold. It applies also to regular Poisson manifolds by a leafwise construction, 
 since the foliation by symplectic leaves is not singular. The Fedosov construction is differential 
 geometric in nature, it basically depends on the choice of a symplectic connection. Therefore,
 in our case where $\sfG$ acts on $T^*_\mu Z$ in a proper way and we can average with 
 respect to a cut-off 
 function, we obtain a $\sfG$-invariant leafwise symplectic connection on $T^*_\mu Z$, and it  yields a formal deformation quantization 
 classified by a foliated cohomology class of the form
 \[
\Omega:=\frac{1}{\hbar}\omega+\sum_{k=0}^\infty\hbar^k\Omega_k\in \frac{1}{\hbar}\omega+
H^2_{\calF^*}(T_\mu^*Z,\C[[\hbar]])^\sfG,
\]
 where   $\omega\in\Omega^2_{\calF^*}(T^*_\mu Z)^{\sfG}$ is the leafwise symplectic form. 
 We denote the resulting sheaf of noncommutative algebras by $ \mathscr{A}^\hbar_{T^*_\mu Z}$. 
 By construction, this sheaf carries an action of $\sfG$ by automorphisms.

For regular Poisson manifolds there is a very general ``algebraic index theorem'' for cyclic 
homology of a formal deformation first proved in \cite{nt-hol} in the context of symplectic Lie algebroids. For our purposes, we need a
 $\sfG$-equivariant version of this theorem, which we will briefly outline
using the alternative construction in \cite{pptold} of the so-called {\em cyclic trace density}: this is a 
morphism of complexes of sheaves
\begin{equation}
\label{trace-density}
\Psi:\!\left(\Tot_\bullet\left(\mathcal{BC}(\mathscr{A}^\hbar_{T^*_\mu Z})\right)\!, b+B\right)\to
 \left(\Tot_\bullet\left(\mathcal{B}\Omega_{\calF^*}\!\otimes\C[\hbar^{-1},\hbar]]\right)\!,d_{\calF^*}\right)
 \!.
\end{equation}
Here, $\mathcal{BC}(\mathscr{A}^\hbar_{T^*_\mu Z})$ denotes the sheafified Connes' $(b,B)$-complex 
computing cyclic homology, c.f.\ \cite{loday}, where $b$ denotes the Hochschild differential. On the other hand 
\[
\Tot_k\mathcal{B}\Omega_{\calF^*}:=\bigoplus_{i\geq 0}\Omega^{2r-2i-k}_{\calF^*},
\]
where $2r$ is the rank of $\calF^*$, and is equipped with the foliated de Rham differential 
$d_{\calF^*}$. The only two ingredients of the trace 
density \eqref{trace-density} are the choice of a symplectic connection as in the Fedosov 
construction, and a universal cyclic cocycle on the formal Weyl algebra of $\R^{2r}$, c.f.\ 
\cite{pptold}. Therefore, since in our set-up this connection is $\sfG$-invariant, one easily 
observes that the morphism \eqref{trace-density} is $\sfG$-equivariant for the natural action 
on the domain and range.

On the other hand there is the 
classical symbol map, combined with the Hochschild--Kostant--Rosenberg isomorphism 
given by
\[
\sigma(a_0\otimes\ldots\otimes a_k):=i^*(a_0(0)da_1(0)\wedge\ldots\wedge da_k(0))\in\Omega^k_{\calF^*}(T^*_\mu Z),
\]
where $a_j(0)\in\calC^\infty_{T^*_\mu Z}$ is the constant term in the $\hbar$ expansion of $a_j\in
\mathscr{A}^\hbar_{T^*_\mu Z}$, for $j=1,\ldots,k$, and $i:\calF^*\to T^*_\mu Z$ is the inclusion of the 
symplectic leaves. This morphism maps the Hochschild differential to zero whereas the $B$-differential is mapped to the foliated de Rham differential $d_{\calF^*}$. The algebraic index theorem for such deformation quantizations measures the 
discrepancy between these two maps. For this we need the $\hat{A}$-genus defined by 
\[
\hat{A}(\calF^*):=\prod_{i=1}^k\frac{ x_i\slash 2}{\sinh(x_i\slash 2)}\in H^{\rm ev}_{\calF^*}\left(T^*_\mu Z;\C\right)^\sfG,
\]
where the $x_i$ are the leafwise Chern roots with respect to an invariant almost complex structure 
compatible with the symplectic form. With this, the algebraic index theorem reads:
 \begin{theorem}
\label{alg-ind-reg-poisson}
The following diagram commutes after taking cohomology:
\[
  \xymatrix{\Tot_\bullet\left(\mathcal{BC}(\mathscr{A}^\hbar_{T^*_\mu Z})\right)\ar[rr]^{\sigma}\ar[drr]_{\Psi}&&
  \Tot_\bullet\left(\mathcal{B}\Omega_{\calF^*}\right) \ar[d]^{\cup\hat{A}(\calF^*)e^{-\Omega\slash 2\pi\sqrt{-1}\hbar}}
  \\
  &&
  \Tot_\bullet\left(\mathcal{B}\Omega_{\calF^*}\right)\otimes\C[\hbar^{-1}, \hbar]]}.
\]
Furthermore, all three morphisms are  equivariant for the $\sfG$-action, so that an invariant cyclic chain in $\Tot_\bullet\left(\mathcal{BC}(\mathscr{A}^\hbar_{T^*_\mu Z})\right)$ lands in the complex $\Tot_\bullet\left(\mathcal{B}\Omega_{\calF^*}\right)^{\sfG}\otimes\C[\hbar^{-1}, \hbar]]$.
\end{theorem}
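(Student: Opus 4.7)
The plan is to reduce the statement to the classical algebraic index theorem for regular Poisson manifolds of \cite{nt-hol}, rephrased in terms of the cyclic trace density of \cite{pptold}, and then upgrade every ingredient to the $\sfG$-equivariant setting by making all Fedosov data $\sfG$-invariant from the outset. The crucial fact enabling this upgrade is that the Fedosov quantization, the $\hat{A}$-form, and the universal cyclic cocycle on the formal Weyl algebra only depend on a leafwise symplectic connection on $T^*_\mu Z$, and such a connection can be constructed $\sfG$-invariantly by averaging an arbitrary one against the cut-off density $c$, exactly as for the invariant leafwise riemannian metric constructed in Section~\ref{sec_properaction}.

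First I would carry out the leafwise Fedosov construction on $T^*_\mu Z$ using this invariant symplectic connection. Because the connection is preserved by $\sfG$, the Weyl bundle, the Fedosov derivation $D_F$, and the resulting sheaf of flat sections $\mathscr{A}^\hbar_{T^*_\mu Z}$ all inherit a canonical $\sfG$-action by automorphisms, and the classifying class $\Omega = \frac{1}{\hbar}\omega + \sum_{k\geq 0} \hbar^k \Omega_k$ naturally lies in $H^2_{\calF^*}(T^*_\mu Z, \C[[\hbar]])^{\sfG}$. Next I would construct $\Psi$ sheaf-theoretically along the lines of \cite{pptold}: combine the Fedosov trivialization, which identifies $\mathscr{A}^\hbar_{T^*_\mu Z}$ with flat sections of the Weyl bundle, with a universal cyclic cocycle on the formal Weyl algebra of $\R^{2r}$ to produce a map to leafwise forms. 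Both building blocks are $\sfG$-equivariant: the universal cocycle depends only on the fiber data, while the trivialization uses the invariant connection. The symbol map $\sigma$, defined via the leafwise Hochschild--Kostant--Rosenberg projection at $\hbar=0$, is manifestly equivariant.

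The commutativity of the diagram on cohomology is then a local statement in $T^*_\mu Z$. Working in a Darboux chart along a single leaf of $\calF^*$ it reduces precisely to the algebraic index calculation on the formal Weyl algebra of $\R^{2r}$ carried out in \cite{pptold, nt-hol}, with the cocycle $\Omega$ and the class $\hat{A}(\calF^*)$ accounting respectively for the Fedosov curvature and the characteristic classes of the symplectic connection. Globally, because $\Psi$ and $\sigma$ are constructed as morphisms of complexes of \emph{sheaves} and the Weyl/Hochschild resolution involved is fine, the local identification glues to the stated commutativity in cohomology. Equivariance of all three morphisms then passes to cohomology, yielding the invariance assertion at the end of the theorem.

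The main obstacle I anticipate is not conceptual but organizational: keeping track of $\sfG$-equivariance throughout the sheaf-theoretic Fedosov machinery, the trivialization map, and the universal Weyl-algebra cocycle, and verifying that the foliated de Rham differential $d_{\calF^*}$ interacts correctly with the leafwise Fedosov resolution so that the non-equivariant argument of \cite{nt-hol, pptold} applies verbatim on each leaf. This is handled by the observation that $\sfG$ acts by leaf-preserving diffeomorphisms on $T^*_\mu Z$ and that every piece of Fedosov data has been chosen $\sfG$-invariantly, so that equivariance is preserved at every stage of the construction.
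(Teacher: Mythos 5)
Your proposal is correct and follows essentially the same route as the paper: the theorem is obtained by invoking the non-equivariant algebraic index theorem of \cite{nt-hol} in the trace-density formulation of \cite{pptold}, and then observing that its only inputs --- a leafwise symplectic connection (made $\sfG$-invariant by averaging against the cut-off density) and the universal cyclic cocycle on the formal Weyl algebra --- are $\sfG$-equivariant, so the whole sheaf-level construction and hence the cohomological statement carries over. Nothing further is needed.
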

Next, consider a formal difference $[e]-[f]$ of two idempotents $e=e_0+\hbar e_1+\ldots$,  and $f=f_0+\hbar f_1+\ldots$  in $\Mat_N(\mathscr{A}^\hbar_{T^*_\mu Z})$ representing a class in $K$-theory. The noncommutative Chern character, c.f.\ \cite{loday}, defines a map 
\[
\Ch:K_0\left(\mathscr{A}^\hbar_{T^*_\mu Z}\right)\to HC_\bullet \left(\mathscr{A}^\hbar_{T^*_\mu Z}\right).
\]
On the other hand, taking the zero order term $[e_0]-[f_0]$ defines an element in the foliated $K$-
theory $K^0_{\calF^*}(T^*_\mu Z)$. Recall, c.f. \cite{ms}, that foliated $K$-theory is the group 
completion of the semigroup of isomorphism classes of foliated vector bundles, and the ordinary 
commutative Chern 
character  in $K$-theory combines with the restriction map 
$i^*:H^\bullet(T^*_\mu Z)\to  H_{\calF^*}^\bullet(T^*_\mu Z)$ to define the foliated Chern character
 $\ch: K^0_{\calF^*}(T^*_\mu Z)\to H_{\calF^*}^\bullet(T^*_\mu Z)$. With this we have the following corollary 
that is used in the proof of the index theorem:
\begin{corollary}
\label{cor-chern}
For $e\in K_0\left(\mathscr{A}^\hbar_{T^*_\mu Z}\right)$, we the following equality in $H^\bullet_{\calF^*}(T^*_\mu Z)$ holds true:
\[
\Psi\left(\Ch(e)\right)=R\left(\hat{A}(\calF^*)\wedge\ch(\sigma(e))\wedge e^{-\Omega/2\pi\sqrt{-1}\hbar}\right).
\]
\end{corollary}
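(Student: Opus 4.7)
The plan is to apply Theorem \ref{alg-ind-reg-poisson} to the cyclic chain $\Ch(e)$ representing the noncommutative Chern character of the idempotent $e$, and then to identify the resulting cohomology class on the right-hand side with the ordinary (commutative) foliated Chern character of the leading symbol $\sigma(e)=[e_0]$.

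First I would recall that under the Hochschild--Kostant--Rosenberg-type symbol map $\sigma$, the noncommutative Chern character of an idempotent in $\Mat_N(\mathscr{A}^\hbar_{T^*_\mu Z})$ is sent precisely to the classical (commutative) Chern character $\ch(\sigma(e))\in H^{\rm ev}_{\calF^*}(T^*_\mu Z)$ of the leading-order idempotent $e_0\in\Mat_N(\calC^\infty_{T^*_\mu Z})$. This is a standard computation: writing $\Ch(e)$ via the idempotent formula $\sum_k c_k\,\tr(e^{\otimes(2k+1)})$ and applying $\sigma$, the constant-$\hbar$ term of $e$ contributes $\tr(e_0(de_0)^{2k})$, which after symmetrization assembles into the foliated Chern character up to the appropriate combinatorial constants. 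The higher-order terms in $\hbar$ either vanish or become $B$-exact and hence die in cyclic cohomology.

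Second, I would apply the commuting diagram of Theorem \ref{alg-ind-reg-poisson} at the level of cohomology: combining $\sigma(\Ch(e)) = \ch(\sigma(e))$ with the cup-product by $\hat{A}(\calF^*)e^{-\Omega/2\pi\sqrt{-1}\hbar}$ yields exactly
\[
\Psi(\Ch(e))=\hat{A}(\calF^*)\wedge\ch(\sigma(e))\wedge e^{-\Omega/2\pi\sqrt{-1}\hbar},
\]
which modulo the representative map $R$ is the asserted formula. The $\sfG$-equivariance of the three morphisms in Theorem \ref{alg-ind-reg-poisson} ensures that the identity takes place in the $\sfG$-invariant foliated cohomology, as needed for the subsequent pairing with invariant foliated cohomology classes.

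The main obstacle I expect is the verification that $\sigma\circ\Ch$ really reproduces the classical Chern character, because $\Ch(e)$ involves all orders in $\hbar$ (not only $e_0$) and one must check that the higher-$\hbar$ contributions are cyclically trivial in cohomology. This is handled by the standard argument that the trace-density $\Psi$ and the HKR map $\sigma$ agree on the Hochschild level up to a $B$-exact correction, which disappears in the total cyclic complex. Once this compatibility is in place, the corollary is a direct read-off of the commuting triangle in Theorem \ref{alg-ind-reg-poisson}.
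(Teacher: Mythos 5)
Your proposal is correct and follows essentially the same route as the paper: the paper states Corollary \ref{cor-chern} as an immediate consequence of the commuting triangle in Theorem \ref{alg-ind-reg-poisson}, with the symbol map sending the noncommutative Chern character of $e$ to the commutative foliated Chern character of $e_0$ and the operator $R$ absorbing the discrepancy in normalization between the two Chern characters. The points you flag as potential obstacles (the higher-order $\hbar$ contributions being cyclically trivial, and $\sfG$-equivariance) are exactly the ones the paper treats as standard, so nothing is missing.
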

Here $R$ is the operator that multiplies the degree $2k$-part of an even differential form 
with $(2\pi\sqrt{-1})^{-k}$. The 
appearance of this factor is caused by the different normalizations of the noncommutative and 
commutative Chern characters.

\subsection{A lemma on $\sfG$-invariant cohomology}
The algebraic index theorem of the previous section yields an equality in the $\sfG$-invariant 
foliated cohomology $H^\bullet_{\calF^*}(T^*_\mu Z)^{\sfG}$. To obtain an actual number and 
relate to the higher index pairing of Proposition \ref{index-pairing}, we need an integration map.
Because of the noncompactness of $Z$, we need the cut-off function for this:
\begin{lemma}
\label{inv-int}
Let $\alpha\in\Omega_{\calF}^{\rm top}(Z)^\sfG$. The functional 
\[
\int_{Z\slash\sfG}\alpha:=\int_Z \alpha\left<c,\mu^*\Omega\right>
\]
vanishes on exact invariant forms and defines a linear map 
\[
H^{\rm top}_{\calF}(Z;\mathbb{K})^\sfG\to \mathbb{K},
\] 
for 
any field $\mathbb{K}$  containing $\mathbb{R}$, which does not depend on the choice of the cut-off function $c$.
\end{lemma}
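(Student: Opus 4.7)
The plan is to verify the two assertions in turn: that the functional vanishes on leafwise exact $\sfG$-invariant forms, and that its value is independent of the cut-off density $c$. Both are manipulations of the cut-off identity combined with $\sfG$-invariance, in the spirit of the trace argument of Proposition~\ref{trace_prop}.

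For the first assertion, write $\alpha = d_\calF\beta$ with $\beta$ a $\sfG$-invariant leafwise form of degree one below top. I would begin by applying Fubini with respect to the fibration $\mu\colon Z\to M$ and integrating by parts leaf-by-leaf: since $c$ has compact support, its restriction to each fiber $\mu^{-1}(x)$ is compactly supported, so leafwise Stokes yields
\[
\int_{\mu^{-1}(x)} c\cdot d\beta \;=\; -\int_{\mu^{-1}(x)} d_\calF c\wedge \beta.
\]
It therefore suffices to show $\int_Z d_\calF c\wedge\beta\,\langle\cdot,\mu^*\Omega\rangle = 0$. Insert the cut-off identity $1 = \int_{\sfG_{\mu(z)}} c(hz)\,dh$ (the version of Definition~\ref{cut-off}(ii) obtained under $g\leftrightarrow g^{-1}$) into the integrand and perform the change of variables $z\mapsto hz$. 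The $\sfG$-invariance of $\beta$ and of $\mu^*\Omega$ lets one pull these factors outside the $h$-integral, while leafwise differentiation commutes with integration over $\sfG_{\mu(z)}$ because this space depends only on $\mu(z)$. Thus $d_\calF c$ is replaced by
\[
\int_{\sfG_{\mu(z)}} d_\calF\bigl(c(hz)\bigr)\,dh \;=\; d_\calF\!\int_{\sfG_{\mu(z)}} c(hz)\,dh \;=\; d_\calF(1) \;=\; 0,
\]
so the whole integral vanishes.

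For the second assertion, let $c'$ be another cut-off density. Insert $1 = \int_{\sfG^{\mu(z)}} c'(g^{-1}z)\,dg$ into $\int_Z\alpha\,\langle c,\mu^*\Omega\rangle$, perform the change of variables $z = gw$, and use the $\sfG$-invariance of $\alpha$ and of $\Omega$ to see that the resulting expression is manifestly symmetric in $c$ and $c'$. This is a direct translation of the independence argument at the end of the proof of Proposition~\ref{trace_prop}.

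The main bookkeeping difficulty is that $c$ and $\Omega$ are sections of line bundles ($\mu^*\!\left|\bigwedge^{\rm top}\right|\!\sfA^*$ and $L$, respectively), so the pairing $\langle c,\mu^*\Omega\rangle$ and the change of variables under the $\sfG$-action must be carefully tracked; the Haar-system interpretation of $c$ on the action groupoid $\sfG\ltimes Z$, together with the $\sfG$-invariance of $\Omega$, is precisely what makes the substitutions $g\leftrightarrow g^{-1}$ and $z\mapsto gz$ compatible with the integration measures on $\sfG^{\mu(z)}$ and $\sfG_{\mu(z)}$. Once this is in place, the computations above are routine.
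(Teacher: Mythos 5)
Your proposal is correct and follows essentially the same route as the paper: integrate by parts leafwise, insert the cut-off identity, change variables using the $\sfG$-invariance of $\beta$ and $\Omega$, and conclude from the differentiated identity $\int_{\sfG^{\mu(z)}} g^* d_\calF c = 0$ (which you derive inline as $d_\calF(1)=0$, where the paper states it as a preliminary remark), with the independence of $c$ handled exactly as in Proposition~\ref{trace_prop}. No substantive differences.
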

\begin{proof}
Given $\beta\in\Omega^{\rm top-1}_{\calF}(Z)^\sfG$, we have to show that 
\[
\int_{Z\slash \sfG} d_\calF\beta=0.
\]
Let us first remark that differentiation of the identity \eqref{eq-cut-off} of Definition \ref{cut-off} yields the identity
\[
 \int_{\sfG^{\mu(z)}}g^*d_\calF c(z)=0,\quad\mbox{for all} ~z\in Z.
\]
Indeed, although strictly speaking $c\in\Gamma^\infty_c(Z,\mu^*\left|\bigwedge^{\rm top}\sfA^*\right|)$ is a density, since 
the density bundle is a pull-back along $\mu$ and $d_\calF$ is the differential along the fibers of $
\mu$, the above equation makes sense. With this we now compute:
\begin{align*}
\int_{Z\slash \sfG} d_\calF\beta&=\int_Z d_\calF\beta\left<c,\mu^*\Omega\right>\\
&=-\int_Z\beta \wedge d_\calF\left<c,\mu^*\Omega\right>&\mbox{(by Stokes' Theorem)}\\
&=-\int_Z\beta \wedge\left<d_\calF c,\mu^*\Omega\right>&\mbox{(since $d_\calF\mu^*\Omega=0$)}\\\
&=-\int_Z\int_{\sfG^{\mu}}g^*c\beta\wedge \left<d_\calF c,\mu^*\Omega\right>&\mbox{(c.f.\ Def \ref{cut-off} (ii))}\\
&=-\int_Z\int_{\sfG^{\mu}}c\beta \wedge \left<(g^{-1})^*d_\calF c(z),\mu^*\Omega\right>\\
&=0,
\end{align*}
where, to go to the fifth line, we have used that both $\beta$ and $\Omega$ are $\sfG$-invariant. This proves the first 
claim. The second claim, that the integration map does not depend on the choice of the cut-off 
function $c$, is proved in the same manner as in the proof of Proposition \ref{trace_prop}.
\end{proof}
\begin{remark}
There are two extensions of this Lemma:
\begin{itemize}
\item[$i)$]
The lift of the $\sfG$-action to $T^*_\mu Z$ is still proper, but it will not be cocompact. It is easy to 
see that the lift $\pi^*c$ is a cut-off density on $T^*_\mu Z$ adapted to the $\sfG$-action in the 
sense of Definition \ref{cut-off}. When one assumes that the differential forms have compact 
support along the $\calF^*$-direction, the same Lemma holds true for $T^*_\mu Z$ when one integrates with 
respect to $\pi^*c$.
\item[$ii)$] One can interpret $\alpha\mu^*\Omega$ as a foliated invariant differential form with 
values in the foliated flat line bundle $\mu^*L$. Clearly the proof of the Lemma holds true for any 
element in $\Omega^\bullet_{\calF}(Z;\mu^*L)^\sfG$, so that
\[
\int_Z\left<c,\alpha\right>,\quad\alpha\in\Omega^{\rm top}_\calF(Z;\mu^*L)^\sfG,
\]
vanishes on exact forms.
\end{itemize}
\end{remark}
\subsection{Proof of Theorem \ref{index-thm}}
With the results of the previous two subsections, the algebraic index theorem for $T^*_\mu Z$, and the 
cohomological nature of the integration map, the proof of Theorem \ref{index-thm} proceeds
in complete analogy with the proof of the index theorem in \cite{ppt}. Instead of providing full 
details, which in the end would repeat many similar statements of {\em loc. cit.}, we give a step by 
step outline, in which  each step can be easily proved by modifying the arguments of \cite{ppt}, to 
which we refer for further details.

First, assume $\sfG$ to be unimodular, and fix an invariant transverse density $\Omega$ as in 
Definition \ref{unimodular}.

\subsubsection*{Step 1: Asymptotic calculus.} Instead of the usual pseudodifferential calculus, one can go over to an asymptotic version
where symbols $a(\hbar,z,\xi)$ depend on an additional variable $\hbar\in [0,\infty)$ and have an 
asymptotic expansion near $\hbar\to 0$ of the form
\[
a\sim\sum_{k=0}^\infty \hbar^k a_{m-k}, \quad a_{m-k}\in\symb_\textup{inv}^{m-k} (Z;\calF).
\]
We write $\asymb^m_{\rm inv}(Z;\calF)$ for the space of such asymptotic symbols of order $m$, 
and $\jsymb^m_{\rm inv}(Z;\calF)$ for the subspace of symbols vanishing  at $\hbar=0$ up to all 
orders. Define the scaling operator $\iota_\hbar:\symb^\infty_{\rm inv}(Z;\calF)\to \symb^\infty_{\rm inv}(Z;
\calF)$ by $(\iota_\hbar a)(z,\xi):=a(z,\hbar\xi)$. With this we can define an associative product on $\asymb^\infty_{\rm inv}(Z;\calF):=\bigcup_m\asymb^m_{\rm inv}(Z;\calF)$ by
\begin{equation}
\label{def-dq}
a_1\circledast a_2:=\begin{cases} \sigma_\hbar\left(\op_\hbar(a_1)\circ\op_\hbar(a_2)\right),&\hbar>0,\\ a_1\cdot a_2,&\hbar=0,\end{cases}
\end{equation}
where $\op_\hbar:=\op\circ\iota_\hbar$ and $\sigma_\hbar:=\iota_{\hbar^{-1}}\circ\sigma$.
The quotient algebra $\mbA^\infty_{\rm inv}:=\asymb^\infty_{\rm inv}(Z;\calF)\slash\jsymb^\infty_{\rm inv}(Z;
\calF)$ is isomorphic to $\symb^\infty_{\rm inv}(Z;\calF)[[\hbar]]$ as a vector space, and the 
product above defines a deformation quantization $\mbA^\infty$ of $T^*_\mu Z$ 
which is $\sfG$-invariant. In the invariant subalgebra $\mbA^\infty_{\rm inv}$ there is the ideal $
\mbA^{-\infty}_{\rm inv}$ which supports a  $\C[\hbar^{-1},\hbar]]$-valued trace induced by the trace of Proposition \ref{trace-symbol}:
\[
 \tau_\Omega(a):=\frac{1}{\hbar^r}\int_{T^*_\mu Z}c(z) a(\hbar,z,\xi)\mu^*\Omega,
\]
where $r$ denotes the rank of the foliation $\calF$.
\subsubsection*{Step 2: Comparing traces.} By the classification of $\star$-products on regular Poisson 
manifolds, c.f.\ \cite{nt-hol}, the deformation quantization of $T^*_\mu Z$ defined by the asymptotic 
pseudodifferential calculus is isomorphic to a $\sfG$-invariant Fedosov quantization 
$\mathscr{A}^\hbar_{T^*_\mu Z}$. On the invariant part $\mathscr{A}^{\hbar,\sfG}_{T^*_\mu Z}$ , 
the cyclic trace density morphism \eqref{trace-density} defines a $\C[\hbar^{-1},\hbar]]$-valued trace
by restricting to the degree zero of the Hochschild complex
\[
\Psi^{2r}_{2r}:\calC_{0}(\mathscr{A}^{\hbar,\sfG}_{T^*_\mu Z})\to\Omega^{2r}_{\calF^*}\otimes
\C[\hbar^{-1},\hbar]],
\]
which satisfies $\Psi^{2r}_{2r}(a\star b-b\star a)=d_{\calF^*}$-exact. By Lemma \ref{inv-int}, 
it follows that  the functional
\[
\tr_\Omega(a):=\int_{T^*_\mu Z}\Psi^{2r}_{2r}(a)\left<c,\mu^*\Omega\right>
\]
defines a trace. Just like in \cite[Prop. 5.4]{ppt}, one proves that the two traces agree exactly: for $a
\in \mbA^{-\infty}_{\rm inv}$, we have $\tau_\Omega(a)=\tr_\Omega(a)$.
\subsubsection*{Step 3: Compatibility with cup-products.} As a next step, we compare the cyclic trace density 
with the trace density as follows: consider $\varphi\in C^k_{\rm diff}(\sfG;Z)$, and write it, for 
simplicity only, as $\varphi=\varphi_0\otimes\ldots\otimes\varphi_k$ with $\varphi_i$ depending on 
a single variable $z\in Z$. For $a=a_0\otimes\ldots\otimes a_k\in \mathscr{A}^{\hbar,\sfG}_{T^*_\mu Z}$, the formula
\[
\begin{split}
  \overline{\mathfrak{X}} (\varphi)(a) :=\tr_\Omega \left(a_0\star \pi^*\varphi_0\star\ldots
  \star a_k\star\pi^*\varphi_k \right)
  \end{split}
\]
defines a morphism of complexes 
\[
\overline{\mathfrak{X}}:\left(\hat{C}^k_{\rm diff}(\sfG;Z),d\right)\to \left(\Tot^
\bullet(\mathcal{BC}(\mathscr{A}^{\hbar,\sfG}_{T^*_\mu Z})),b+B\right).
\] 
The crucial identity is now:
\[
\overline{\mathfrak{X}}(\varphi)(a)=\int_{T^*_\mu Z}\Psi(a)\wedge \Phi_Z(\varphi)\left<c,\mu^*\Omega\right>.
\]
This equality corresponds to Prop. 5.8. of \cite{ppt}.

\subsubsection*{Step 4: Final computation.} In this last step we put all ingredients together to compute the 
index pairing \eqref{index-pairing}. We consider a cohomology class
 $\alpha\in H^{2k}_{\calF}(Z)^\sfG$ and represent it by a localized groupoid cocycle 
 $\varphi\in C^{2k}_{\rm diff}(\sfG;Z)$. Given a $\sfG$-invariant elliptic differential operator $D$ 
 along the fibers of $\mu:Z\to M$, its class 
 $\Ind_{\rm loc}(D)\in K_0^{\rm loc}\left(\Psi^{-\infty}_{\rm inv}(Z)\right)$
 is represented by an idempotent $k_D\in M_N(\Psi^{-\infty}_{\rm inv}(Z))$ (actually a formal 
 difference of idempotents, but this does not alter the computation) which we write as $
 \Op(a)$ with $a\in\mbA^{-\infty}_{\rm inv}$. Next we scale the operator $D$ by $\hbar\in[0,\infty)$ 
 by the rule $\partial/\partial z_i\mapsto \hbar^{-1}\partial/\partial z_i$ in local coordinates $z_i$ 
 along the fibers of $\mu$, and observe that the localized $K$-theory class represented by $\Op_
 \hbar(a)$ does not depend 
 on $\hbar$. The pairing with $\alpha$ can then be computed in the limit $\hbar\to 0$:
 \begin{align*}
&\left<\Ind_{\rm loc}(D),\alpha\right>_{\rm loc} \\
 & \hspace{0.3cm}=\tau_\Omega\left(\varphi_0k_D\varphi_1k_D\cdots\varphi_{2k}k_D\right)&\mbox{(c.f.\ Prop. \ref{index-pairing})}\\
  & \hspace{0.3cm}=\lim_{\hbar\to 0}\tau_\Omega\left(\varphi_0\Op_\hbar(a)\varphi_1\Op_\hbar(a)\cdots
  \varphi_{2k}\Op_\hbar(a)\right)\\
  & \hspace{0.3cm}= \lim_{\hbar\to 0}\tau_\Omega\left(\Op_\hbar(\varphi_0\star a)\Op_\hbar(\varphi_1\star a)\cdots
\Op_\hbar(  \varphi_{2k}\star a)\right)&\mbox{(c.f.\ \eqref{def-dq} \& \eqref{quantization})} \\
&\hspace{0.3cm}=\lim_{\hbar\to 0}\tau_\Omega\left(\Op_\hbar(\varphi_0\star a\star \varphi_1\star a\star \varphi_{2k}\star a)\right)&\mbox{(c.f. \eqref{def-dq})}\\
&\hspace{0.3cm}=\lim_{\hbar\to 0}\tr_\Omega\left(\varphi\star a\star \varphi_1\star a\star \varphi_{2k}\star a\right)&\mbox{(by Step 2)}\\
&\hspace{0.3cm}=\lim_{\hbar\to 0} \int_{T^*_\mu Z\slash\sfG}\Psi(a\otimes\ldots\otimes a)\wedge \Phi_Z(\varphi)&\mbox{(by Step 3)}\\
&\hspace{0.3cm}=\frac{1}{(2\pi\sqrt{-1})^k} \int_{T^*_\mu Z\slash\sfG}\hat{A}(\calF^*)\ch(\sigma(D))\pi^*\alpha. &\mbox{(by Cor.  \ref{cor-chern})}
 \end{align*}
The integral in the last two lines is the invariant integral defined in Lemma \ref{inv-int} using the 
auxiliary cut-off function $c$. In the final line we have also used the fact that $a\otimes\ldots
 \otimes a$ represents the components of the noncommutative Chern character of the localized index class, c.f.\ Remark \ref{rk-K}. The characteristic class of the formal quantization constructed 
 by the asymptotic pseudodifferential calculus in 1) above is given by the leafwise symplectic 
 form, and since its cohomology class is trivial, this term does not appear in the index formula.  
 This finishes the proof of Theorem \ref{index-thm} for proper actions of unimodular Lie groupoids.
\begin{remark}
In this final remark we explain how to prove the main theorem in the case where $\sfG$ is not 
unimodular. As remarked before, in this case, there is no trace on the algebra $\Psi^{-\infty}_{\rm 
inv}(Z;\calF)$ and one has to modify the pairing to a pairing with groupoid cocycles twisted by the 
representation $L$ as in \eqref{tw-pairing}. To compute this pairing, we follow the strategy in 
\cite[\S 6]{ppt}: we trivialize the real line bundle $L$ by choosing a nonvanishing volume form $
\Omega\in\Gamma^\infty(M;L)$.
Its failure to be $\sfG$-invariant is measure by the so-called modular function 
\[
\delta^\Omega_{\sfG}(g):=\frac{g^*\Omega}{\Omega},
\]
which satisfies 
$\delta^\Omega_\sfG(g_1g_2)=\delta^\Omega_{\sfG}(g_1)\delta^\Omega_\sfG(g_2)$.
After this trivialization of $L$, the complex $C^\bullet_{\rm diff}(Z;\mu^*L)$ identifies with 
$C^\infty_{\delta-\rm inv}(Z^{(\bullet)}_\mu)$, where $\delta$-inv means that the functions satisfy 
the twisted invariance property
\[
\varphi(gz_0,\ldots,gz_k)=\delta^\Omega_{\sfG}(g^{-1})\varphi(z_0,\ldots,z_k).
\]
The differential is still given by \eqref{diff-def}. If we examine the proof of Proposition \ref{trace_prop}, we see that this twisting exactly
 compensates the failure of $\tau_\Omega$ to be a trace, making the pairing well-defined.
 
 After localization, we find that the complex $\Omega_\calF(Z;\mu^*L)^\sfG$ computing 
 $H^\bullet_\calF(Z;\mu^*L)^\sfG$ can be identified with the invariant foliated de Rham complex 
 $\Omega_\calF(Z)^\sfG$, again with a twisted differential that makes the integral of Lemma 
 \ref{inv-int} vanishing on (twisted-)exact forms. With this observation, the argument applying
 the algebraic index theorem remains valid in this case and one finds exactly Theorem
  \ref{index-thm}.
\end{remark}

 \section{Examples}\label{sec:example}
 
\subsection{Principal bundles} 
As a particular example, let us consider the case where $\sfG$ acts freely on $Z$, i.e., $Z$ is a principal $\sfG$-bundle. For simplicity, we will assume that $\sfG$ is unimodular, and there is a $\sfG$-invariant volume element $\Omega$. In this case the quotient space 
$B:=Z\slash\sfG$ is a smooth manifold, assumed to be compact. For principal bundles, many of the 
constructions in this paper have a natural interpretation in terms of the so-called {\em gauge 
groupoid} $\sfG(Z)$ of $Z$: this is the groupoid over the base $B$ with the space of arrows defined 
by
\[
\sfG(Z):=(Z\times_M Z)\slash\sfG,
\]
and the groupoid structure is induced by the $\sfG$-equivariant groupoid structure of the pair 
groupoid $Z\times_M Z\rightrightarrows Z$ associated to the moment map $\mu:Z\to M$. In fact, 
the principal bundle $Z$ defines a Morita equivalence between $\sfG$ and $\sfG(Z)$. 

The Lie algebroid $\sfA(Z)$ of the gauge groupoid $\sfG(Z)$ is given defined on the vector bundle 
$T_\mu Z\slash\sfG\to B$. When dealing with foliated invariant differential forms on $T^*_\mu\sfG$, 
we expect in the case of a free action to be able to push everything down to $B$, or better to the dual 
of the Lie algebroid $\sfA^*(Z)$. The space $T^*_\mu Z$ is foliated itself by the fibers of the 
composition of the cotangent projection with $\mu$, abusively also denoted $\mu$, and with this 
we have
\[
T_\mu(T^*_\mu Z)\slash\sfG\cong \pi^!\sfA(Z),
\]
where $\pi:\sfA^*(Z)\to B$ is the projection. Here $\pi^!\sfA(Z)$ is the pull-back Lie algebroid as in 
\cite{mack}, and the isomorphism is proved just as in \cite[Lemma 4.3]{ppt}. Since we pulling back 
$\sfA(Z)$ along the projection of its dual, we observe, just as in {\em loc. cit.}, that there exists a canonical symplectic form $\Theta\in\Omega^2_{\pi^!\sfA(Z)}$. 

\begin{proposition}\label{prop:integration-B}
Let $\sfG$ act freely on $Z$, and take  $\alpha\in\Omega^{\rm top}_\calF(T^*_\mu Z)^\sfG$ with 
compact support along the fibers of $T^*_\mu Z\to Z$. Then we have 
the equality
\[
\int_{T^*_\mu Z\slash\sfG}\alpha=\int_{\sfA^*(Z)}\left<\alpha|_{\sfA^*(Z)},\Theta^r\right>,
\]
where $r$ is the rank of the Lie algebroid $\sfA(Z)$.
\end{proposition}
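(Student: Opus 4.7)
The plan is to exploit the principal $\sfG$-bundle structure $p:T^*_\mu Z\to A^*(Z)$, available because the $\sfG$-action on $Z$ (and hence on $T^*_\mu Z$) is free. Both sides of the claimed equality express the same top density on $A^*(Z)$: the left-hand side is the descent of the $\sfG$-invariant foliated density on $T^*_\mu Z$ via the cut-off construction of Lemma~\ref{inv-int}, while the right-hand side packages the same data directly in terms of the Lie algebroid $\pi^!A(Z)$.

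First I would use the isomorphism $T_\mu(T^*_\mu Z)/\sfG\cong\pi^!A(Z)$ recalled just before the proposition (cf.\ \cite[Lemma 4.3]{ppt}) to give precise meaning to the right-hand side: it identifies $\Omega^{\rm top}_\calF(T^*_\mu Z)^\sfG$ with $\Omega^{\rm top}_{\pi^!A(Z)}(A^*(Z))$, and under this isomorphism $\alpha$ corresponds to $\alpha|_{A^*(Z)}$, while the leafwise canonical symplectic form on $T^*_\mu Z$ corresponds to $\Theta$. The top form $\Theta^r$ then trivializes the line bundle $\bigwedge^{2r}(\pi^!A(Z))^*$ and furnishes the Liouville density on $A^*(Z)$ via the pairing $\langle\alpha|_{A^*(Z)},\Theta^r\rangle$.

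The core of the argument would then be carried out in a local trivialization of the principal $\sfG$-bundle $Z\to B$ over a small $U\subset B$, which gives $T^*_\mu Z|_U\cong U\times_M T^*\sfG|_U$. A $\sfG$-invariant leafwise top form factors in such a trivialization as a left-invariant Haar-type density in the $\sfG$-direction wedged with a top form pulled back from $A^*(Z)|_U$. The cut-off identity $\int_{\sfG^{\mu(z)}}c(g^{-1}z)=1$ of Definition~\ref{cut-off} then integrates out the $\sfG$-fiber factor to unity, leaving exactly the top density on $A^*(Z)|_U$ that appears on the right-hand side. Assembling these local computations via a partition of unity on the compact base $B=Z/\sfG$ yields the global equality, and, as in Lemma~\ref{inv-int}, the result is independent of the choice of local trivializations and of the cut-off density.

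The main obstacle is the careful bookkeeping of densities: one must verify that after the $\sfG$-integration, the transverse density $\langle\pi^*c,(\mu\circ\pi)^*\Omega\rangle$ on $T^*_\mu Z$ combines with the leafwise form $\alpha$ to produce precisely the Liouville pairing with $\Theta^r$ on $A^*(Z)$. This hinges on the fact that the canonical pairing between $A(Z)$ and $A^*(Z)$---which is encoded by the symplectic form $\Theta$ on $\pi^!A(Z)$---is exactly the change-of-variables factor for pushing a $\sfG$-invariant density through the quotient $T^*_\mu Z\to A^*(Z)$. Once this identification is established, the remainder of the argument is essentially pushforward along a principal bundle together with the normalization of the cut-off function.
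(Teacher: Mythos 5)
Your proposal is correct and follows essentially the same route as the paper: descend the invariant form along the quotient $q:T^*_\mu Z\to A^*(Z)$, use the Liouville pairing with $\Theta^r$ to rewrite $\alpha$ as $q^*\bigl(\left<\alpha|_{A^*(Z)},\Theta^r\right>\bigr)$ times the descended volume $\Omega_{A^*(Z)}$ coming from $q^*\Theta^r\wedge\mu^*\Omega$, and then collapse the fiber integral using the cut-off normalization $\int_{\sfG^{\mu(z)}}c(g^{-1}z)=1$. The only cosmetic difference is that the paper carries this out globally with the quotient map rather than in local trivializations patched by a partition of unity.
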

\begin{proof}
Let $q:T^*_\mu Z\to \sfA^*(Z)$ be the quotient map. We pull back $\Theta$ to an element $q^*\Theta\in \Omega^2_\calF(T^*_\mu Z)^\sfG$ on $T^*_\mu Z$. $q^*\Theta^r\wedge \mu^* \Omega$ defines a $\sfG$-invariant element of $\Omega^{\rm top}(T^*_\mu Z)\otimes \mu^*L$. The $\sfG$-invariance property makes $q^*\Theta^r\wedge \mu^* \Omega$ naturally descend to a volume element $\Omega_{\sfA^*(Z)}$ with respect to which the integral on $\sfA^*(Z)$ is well defined. As a $\sfG$-invariant element of $\Omega^{\rm top}_{\calF}(T^*_\mu Z)$, $\alpha$ is the pullback of $\alpha|_{\sfA^*(Z)}$. Then 
\begin{align}
\label{eq:integral-B}
\int_{T^*_\mu Z\slash\sfG}\alpha&=\int_{T^*_\mu Z}\left<\pi^*c(z), \alpha\wedge \mu^*\Omega\right>\\
\notag&=\int_{T^*_\mu Z}\left<\pi^*c(z), q^* \alpha|_{\sfA^*(Z)}\wedge \mu^*\Omega\right>.
\end{align}
Now $\alpha$ can be written as 
\[
\left<q^*\alpha|_{\sfA^*{Z}}, q^*\Theta\right>q^*\Theta=q^*(\left<\alpha|_{\sfA^*(Z)}, \Theta\right>\Theta).
\] 
Continuing the computation in Eq. (\ref{eq:integral-B}), we obtain that 
\[
\begin{split}
\int_{T^*_\mu Z\slash\sfG}\alpha&=\int_{T^*_\mu Z}q^*(\left<\alpha|_{\sfA^*(Z)}, \Theta\right>)\left<\pi^*c(z), q^*\Theta^r\wedge \mu^* \Omega\right>\\
&=\int_{\sfA^*(Z)}\left<\alpha|_{\sfA^*(Z)}, \Theta\right>\Omega_{\sfA^*(Z)}\int_{\sfG_{\mu(z)}}c(g^{-1}z).
\end{split}
\]
By the definition of the cut-off function $c$, we conclude the identity of the proposition. 
\end{proof}
We apply Proposition \ref{prop:integration-B} to the index formula in Theorem \ref{index-thm}, and 
obtain the following index formula for free actions:
\begin{theorem}\label{thm:free} Suppose that $Z$ is a principal $\sfG$-bundle. For a closed invariant form $\alpha\in \Omega^{2k}_\calF(Z)^G$, we have:
\[
 \left<\alpha,\Ind_{\rm loc}(D)\right>_{\rm loc}:=\frac{1}{(2\pi\sqrt{-1})^k}\int_{\sfA^*(Z)}\left<\alpha|_{\sfA^*(Z)}\wedge \hat{A}(\calF^*)\wedge\ch(\sigma(D)), \Theta\right> .
\]
\end{theorem}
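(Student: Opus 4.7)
The plan is to derive Theorem \ref{thm:free} as a direct consequence of the main index theorem (Theorem \ref{index-thm}) combined with the integration identity established in Proposition \ref{prop:integration-B}. The geometric content has already been supplied by these two results, so the work here is essentially a bookkeeping computation involving the Morita equivalence between $\sfG$ and the gauge groupoid $\sfG(Z)$.

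First, I would apply Theorem \ref{index-thm} to the given invariant foliated class $\alpha \in \Omega^\bullet_\calF(Z)^\sfG$, which yields
\[
\left<\alpha,\Ind_{\rm loc}(D)\right>_{\rm loc} = \frac{1}{(2\pi\sqrt{-1})^k}\int_{T^*_\mu Z}\pi^*\langle c,\alpha\rangle\wedge \hat{A}(\calF^*)\wedge\ch(\sigma(D)).
\]
Using the invariant integration described in Lemma \ref{inv-int} (and its extension, noted in the subsequent remark, to $T^*_\mu Z$ via the cut-off density $\pi^*c$), the right-hand side rewrites as the integral over $T^*_\mu Z/\sfG$ of the $\sfG$-invariant leafwise top form $\pi^*\alpha \wedge \hat{A}(\calF^*)\wedge \ch(\sigma(D))$, which has compact support along the fibers of $T^*_\mu Z \to Z$ because $\ch(\sigma(D))$ does.

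Next, since the $\sfG$-action is free, $T^*_\mu Z/\sfG$ is smoothly identified with $A^*(Z)$, and the quotient map $q: T^*_\mu Z \to A^*(Z)$ makes every $\sfG$-invariant foliated form descend canonically. I would then apply Proposition \ref{prop:integration-B} directly to the full integrand to convert the integral over $T^*_\mu Z/\sfG$ into the integral over $A^*(Z)$ paired with $\Theta$ as in the proposition. The final step is to check that under $q^*$, the $\sfG$-invariant classes $\hat{A}(\calF^*)$ and $\ch(\sigma(D))$ on $T^*_\mu Z$ are precisely the pullbacks of the classes on $A^*(Z)$ named $\hat{A}(\calF)$ and $\ch(\sigma(D))$ in the statement of the theorem; this is a naturality observation, because both classes are constructed from $\sfG$-equivariant data on $T_\mu Z$ and its complexification, and under the Morita equivalence this data descends to the corresponding data for the Lie algebroid $A(Z)$ of the gauge groupoid.

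The main (mild) obstacle is purely notational: reconciling the pairing $\langle -, \Theta\rangle$ appearing in Proposition \ref{prop:integration-B} with the one written in Theorem \ref{thm:free}, and verifying that $\pi^*\alpha$ on $T^*_\mu Z$ descends to the form denoted $\alpha|_{A^*(Z)}$ on $A^*(Z)$ (which is consistent with the notational convention fixed in the statement of Proposition \ref{prop:integration-B}). Once these identifications are in place, the formula of Theorem \ref{thm:free} follows by a direct substitution; no new geometric input is required.
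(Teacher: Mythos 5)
Your proposal is correct and follows essentially the same route as the paper, which derives Theorem \ref{thm:free} in one line by applying Proposition \ref{prop:integration-B} to the formula of Theorem \ref{index-thm}; your write-up simply makes explicit the intermediate identifications (the invariant integration over $T^*_\mu Z/\sfG$, the descent of the invariant characteristic classes along $q$, and the notational matching of $\hat{A}(\calF^*)$ with $\hat{A}(\calF)$) that the paper leaves implicit.
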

When $M$ is compact, the left translation defines a proper and cocompact $\sfG$ action on $Z=\sfG$ 
with the quotient being $M$. The gauge groupoid $\sfG\times_M \sfG\slash\sfG$ is identical to $\sfG$. 
Theorem \ref{thm:free} in this case recovers the main theorem proved in \cite[Thm. 5.1]{ppt}. 
\begin{remark}
Let us finally explain how the index theorem above for principal $\sfG$-bundles is connected to the 
index theorems of Connes, c.f.\ \cite[Sec. III.$7.\gamma$]{connes}, and Gorokhovsky--Lott, c.f.\ 
\cite{G}. For this we assume $\sfG$ to be a foliation groupoid. This means, c.f.\ \cite{cm} that the anchor of its Lie algebroid $\rho:\sfA\to TM$ is injective, and its image defines the foliation. If we write $\nu_\sfA:=TM\slash \rho(\sfA)$ for the normal bundle to the foliation,
the line bundle $L$ needed as a 
twisting in the non-unimodular case can be identified with $L\cong \bigwedge^{\rm top}\nu_\sfA^*=O(\nu_\sfA)$, 
the orientation bundle of $A$. In \cite[Remark 7.4.]{ppt}, we have constructed a canonical map
\begin{equation}
\label{com}
i: H^\bullet_{\rm diff}(\sfG;L)\to H^\bullet(BG;O(\nu_\sfA)).
\end{equation}
Now, let $\pi:Z\to B$ be a principal $\sfG$ bundle. The gauge groupoid $\sfG(Z)$ is also a foliation 
groupoid with induced foliation given by $\calF=\mu^*\sfA\slash \sfG$, and  normal bundle $\nu_{\calF}$ 
satisfying $\pi^*\nu_{\calF}\cong\mu^*\nu_{\sfA}$. With these identifications, the characteristic classes 
in Theorem \ref{thm:free} can be identified as the usual foliated characteristic classes of the induced 
foliation $\calF$ as in \cite[Ch. 5]{ms}. The right hand side of this integral can then be viewed as the 
result of pairing this foliated cohomology class with a transverse current, which can be identified, 
following the argument in the proof of Theorem 4 of \cite{G}, as $\psi^*(i(\alpha))$, where 
$\psi:B\to BG$ is the map, unique up to homotopy classifying $Z$. With this observation, Theorem
\ref{thm:free} yields the following result for foliation groupoids:
\begin{theorem}
Let $\sfG$ be a foliation groupoid and $\pi:Z\to B$ a principal $\sfG$-bundle equipped with an elliptic $
\sfG$-invariant differential operator $D$. For $c\in H^{2k}_{\rm diff}(\sfG;L)$, we have
\[
\left<c,{\rm Ind}(D)\right>=\frac{1}{(2\pi\sqrt{-1})^k}\int_B\hat{A}(\calF^*)\wedge \ch_\calF(\sigma(D))\wedge \psi^*(i(c)),
\]
where $\calF$ denotes the induced foliation on $B$ and $\psi:B\to BG$ is the classifying map of the principal bundle.
\end{theorem}
This is exactly the index theorem of \cite[Sec. III.$7.\gamma$]{connes} and \cite{G}, for the transverse 
currents coming from classes in the smooth groupoid cohomology. In that light, it should be remarked 
that the map \eqref{com} is not an isomorphism, so  \cite[Sec. III.$7.\gamma$]{connes} and \cite{G} 
give more general index theorems for the smaller class of foliation groupoids.
\end{remark}

\subsection{Homogeneous spaces of Lie groups}
Let $G$ be a Lie group, and $H$ be a compact subgroup. We consider the special case that $\sfG$ is a unimodular Lie group $G$, and $Z$ is the homogeneous space $G/H$. The map $\mu:Z\to point$ is the trivial map, and the foliation $\calF$ is the whole manifold $Z$.  Let $\mathfrak{g}$ and $\mathfrak{h}$ be the Lie algebra of $G$ and $H$, and $\mathfrak{g}^*$ and $\mathfrak{h}^*$ be their dual. Then by translation, $\Omega^\bullet(G/H)^G$ isomorphic to $(\wedge^\bullet T^*_{[e]}G/H)^H$, where $[e]$ is the coset $eH$ in $G/H$, and $(\wedge^\bullet T^*_{[e]}G/H)^H$ is isomorphic to $\left(\wedge^\bullet (\mathfrak{g}/\mathfrak{h})^*\right)^H$. Under these isomorphisms, the de Rham differential on $\Omega^\bullet(G/H)^G$ is isomorphic to the Lie algebra cohomology differential on the relative Lie algebra cohomology complex $\left(\wedge^\bullet (\mathfrak{g}/\mathfrak{h})^*\right)^H$, and therefore $H^*(Z)^G$ is isomorphic to $H^\bullet(\mathfrak{g}, H; \C)$. 

Let $D$ be a $G$-invariant elliptic differential operator on $Z=G/H$. The principal symbol $\sigma(D)$ defines a $K$-theory element on $T^*G/H=T^*_\mu Z$. Theorem \ref{index-thm} states that for any $\alpha\in H^\bullet(G/H)^G$, 
\begin{align}
\label{eq:homog}
 \left<\alpha, \Ind_{\rm loc}(D)\right>_{\rm loc}=&\\
 \notag  \frac{1}{(2\pi\sqrt{-1})^k}&\int_{T^*G/H}\left<\pi^*c(z), \alpha\right>\wedge \hat{A}(T^*G/H)\wedge \ch(\sigma(D)) .
\end{align}
As is explained in \cite[Remark 6.15]{wang}, by the $G$-invariance, the above integral can be reduced to an integral on $\mathfrak{m}^*$ where $\mathfrak{m}$ is a complement of $\mathfrak{h}$ in $\mathfrak{g}$.  More concretely, $G$ is a principal $H$ bundle  over $G/H$. By choosing a connection on this bundle, Connes and Moscovici \cite{cmL2} introduced an $\hat{A}$ class $\hat{A}(\mathfrak{g},H)\in H^\bullet(\mathfrak{g},H;\C)$. The restriction of the symbol $\sigma(D)$ of a $G$-invariant elliptic operator to $T^*_{[e]}G/H$ defines a $H$-equivariant $K$-theory element. Connes and Moscovici \cite{cmL2} introduced a Chern character $\ch(\sigma(D))_{\mathfrak{m}^*}$ of $\sigma(D)$ in $H^\bullet(\mathfrak{g}, H;\C)$.  Under the isomorphism between $H^\bullet(G/H)$ and $H^\bullet(\mathfrak{g},H;\C)$, Wang explained in \cite[Remark 6.15]{wang}, $\hat{A}(T^*G/H)$ is reduced to $\hat{A}(\mathfrak{g},H)\in H^\bullet(\mathfrak{g},H;\C)$ and $\ch(\sigma(D))$ is reduced to $\ch(\sigma(D))_{\mathfrak{m}^*}\in H^\bullet(\mathfrak{g},H;\C)$.  And the index pairing can be written as 
\[
 \left<\alpha, \Ind_{\rm loc}(D)\right>_{\rm loc}:=\frac{1}{(2\pi\sqrt{-1})^k}\left<\alpha\wedge \hat{A}(\mathfrak{g},H)\wedge \ch(\sigma(D))_{\mathfrak{m}^*},[V]\right>,
 \]
 where $[V]$ is the fundamental class of $\mathfrak{m}^*$. When $\alpha$ is $1\in H^0(\mathfrak{g}, H;\C)$, this is exactly the $L^2$-index theorem proved by Connes and Moscovici \cite{cmL2}. Now, with our theorem, the freedom of choosing different $\alpha$ provides a powerful approach to understand the full class $ \hat{A}(\mathfrak{g},H)\wedge \ch(\sigma(D))_{\mathfrak{m}^*}\in H^{\rm even}(\mathfrak{g},H; \C)$. 
 \begin{remark}
 Let us finally drop the unimodularity assumption and assume that $H\subset G$ is maximal 
 compact. The index theorem is now phrased in terms of the natural pairing
 \[
 \left<~,~\right>:H^\bullet(\g,H;\C)\times H^\bullet(\g,H;L)\to\C,
 \]
 where $L=\bigwedge^{\rm top}\g$. As remarked by Connes and Moscovici, in the case of the
 homogenous space space $Z=G\slash H$, the index class $\Ind(D)$ can be constructed in 
 $K_0(\calC^\infty_c(G))$, and we can pair with group cocycles $\nu\in H^{2k}_{\rm diff}(G;L)$ 
 using the pairing of \cite{ppt}. Since $H$ is maximal compact in $G$, when $G$ is connected, the van Est morphism $\Phi_Z$ of this paper induces an isomorphism $H^\bullet_{\rm diff}(G;L)\cong H^\bullet(\g,H;L)$: this is the classical van Est theorem for Lie groups \cite{ve}. Our main result now implies:
 \begin{theorem}
 Let $H\subset G$ be a maximal compact subgroup of a Lie group, and $D$ a $G$-invariant 
 elliptic differential operator on $Z:=G\slash H$. For $\nu\in H^{2k}_{\rm diff}(G;L)$ we have:
 \[
 \left<\nu,\Ind(D)\right>=\frac{1}{(2\pi\sqrt{-1})^k}\left<\hat{A}(\mathfrak{g},H)\wedge 
 \ch(\sigma(D))_{\mathfrak{m}^*},\Phi_Z(\nu)\right>
\]
 \end{theorem}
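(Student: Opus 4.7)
The plan is to derive this theorem as a direct corollary of Theorem \ref{index-thm} applied to the case $\sfG = G$ (a Lie group) acting on $Z = G/H$, combined with the classical van Est isomorphism and the reduction of the integral on $T^*G/H$ to the fiber $\mathfrak{m}^*$ carried out earlier in this section. The groupoid $G$ acts properly on $G/H$ since $H$ is compact, and the action is cocompact since $G/H$ modulo $G$ is a point; hence the hypotheses of Theorem \ref{index-thm} are satisfied.

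First, I apply the corollary of Theorem \ref{index-thm} to the class $\nu \in H^{2k}_{\rm diff}(G;L)$. Since the foliation $\calF$ is by the fibers of the trivial moment map $\mu:G/H \to \{\text{pt}\}$, we have $\calF = T(G/H)$ and $T^*_\mu Z = T^*(G/H)$, and the corollary gives
\[
\langle \nu, \Ind(D)\rangle = \frac{1}{(2\pi\sqrt{-1})^k}\int_{T^*(G/H)} \pi^*\langle c, \Phi_Z(\nu)\rangle \wedge \hat{A}(\calF^*)\wedge \ch(\sigma(D)).
\]
Here $\Phi_Z(\nu) \in H^{2k}_{\calF}(Z;\mu^*L)^G = H^{2k}(G/H;L)^G$, and under homogeneity $H^\bullet(G/H;L)^G \cong H^\bullet(\g,H;L)$ so $\Phi_Z$ is identified with the classical van Est map (using here that $H$ is maximal compact and $G$ is connected so that it is an isomorphism in all degrees by the van Est theorem \cite{ve}).

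Second, I use the reduction to $\mathfrak{m}^*$ already invoked in the unimodular case following \eqref{eq:homog}. By $G$-invariance of the integrand and the cut-off property of $c$, integration over $T^*(G/H)$ against $\pi^*c$ descends to the fiber $T^*_{[e]}(G/H) \cong \mathfrak{m}^*$. The class $\hat{A}(\calF^*)$ restricted to this fiber reduces to $\hat{A}(\g,H)$, and $\ch(\sigma(D))$ reduces to the Connes--Moscovici Chern character $\ch(\sigma(D))_{\mathfrak{m}^*}$, as explained by Wang \cite[Remark 6.15]{wang}. Combining these identifications gives the asserted formula in the pairing form
\[
\langle \nu, \Ind(D)\rangle = \frac{1}{(2\pi\sqrt{-1})^k}\bigl\langle \hat{A}(\g,H)\wedge \ch(\sigma(D))_{\mathfrak{m}^*},\, \Phi_Z(\nu)\bigr\rangle.
\]

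The main obstacle — or rather, the point requiring the most care — is the passage from the unimodular setting to a general (not necessarily unimodular) Lie group $G$. In this case there is no trace on $\Psi^{-\infty}_{\rm inv}(Z;\calF)$, and one must work with the twisted pairing \eqref{tw-pairing} with values in the modular line bundle $L = \bigwedge^{\rm top}\g$. Following the strategy outlined in the final Remark of Section~\ref{sec:index}, I would trivialize $L$ by a nonvanishing volume form $\Omega$ and absorb the modular function $\delta^\Omega_G$ as a twisting of the Chevalley--Eilenberg differential on $C^\bullet_{\rm diff}(G;\C)$. This twisted differential is precisely the one appearing in the classical relative Lie algebra cohomology complex $(\wedge^\bullet(\g/\h)^*)^H$ with coefficients in $L$, so the identification of $\Phi_Z$ with the classical van Est map persists. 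Once this bookkeeping is done, the proof of the index formula goes through verbatim, and the final pairing is interpreted as the natural pairing $H^\bullet(\g,H;\C) \times H^\bullet(\g,H;L) \to \C$ as stated.
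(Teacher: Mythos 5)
Your proposal is correct and follows essentially the same route as the paper: apply Theorem \ref{index-thm} (in its non-unimodular, $L$-twisted form via the final Remark of Section \ref{sec:index}), identify $\Phi_Z$ with the classical van Est isomorphism $H^\bullet_{\rm diff}(G;L)\cong H^\bullet(\g,H;L)$ for $H$ maximal compact, and reduce the integral over $T^*(G/H)$ to $\mathfrak{m}^*$ via the $G$-invariance and cut-off argument of Connes--Moscovici and Wang. The paper itself treats this theorem as an immediate consequence of exactly these ingredients, so no further comparison is needed.
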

\end{remark}

 \subsection{Families on orbifolds}
By an \'etale Lie groupoid, we mean a Lie groupoid whose source and target maps are local diffeomorphisms. Consider a proper \'etale groupoid $\sfG$ such that the quotient space $X:=M/\sfG$ is compact. A proper \'etale groupoid $\sfG$ is unimodular. And the quotient space $M/\sfG$ is an orbifold, i.e., a topological manifold that is locally diffeomorphic to the quotient of an euclidean space by a finite group action.  Consider a proper $\sfG$ action on $\mu:Z\to M$. Let $D$ be $\sfG$-invariant elliptic differential operator on $Z$. Theorem \ref{index-thm} states 
\begin{equation}\label{eq:orbfld}
 \left<\alpha,\Ind_{\rm loc}(D) \right>_{\rm loc}:=\frac{1}{(2\pi\sqrt{-1})^k}\int_{T^*_\mu Z}\pi^*\left<c,\alpha\right>\wedge \hat{A}(\calF)\wedge\ch(\sigma(D)), 
\end{equation}
for $\alpha\in H^\bullet_\calF(Z)^\sfG$.  We observe that quotients of leaves of $\calF$ by the $\sfG$-action defines a foliation $\calF_X$ on $M/\sfG$. And $D$ defines a leafwise elliptic differential operator $D_X$ on $X$ with respect to the foliation $\calF_X$. The index pairing (\ref{eq:orbfld}) computes the index numbers of the operator $D_X$. This is a longitudinal index theorem for the leafwise elliptic operator $D_X$ on the orbifold $X$ with the foliation $\calF_X$. 

Assume $\mu:Z\to M$ is a fiber bundle over $M$ with a compact closed fiber $F$. Let $D$ be a $
\sfG$-invariant elliptic differential  operator on $Z$. For $x\in M$, $D|_{\mu^{-1}(x)}$ is an elliptic 
differential operator on a compact closed manifold $\mu^{-1}(x)$, and therefore the kernel and 
cockerel of $D|_{\mu^{-1}(x)}$ are finite dimensional vector spaces. Varying $x\in M$, as $D$ is $
\sfG$-invariant, $\ker(D)$ and ${\rm coker}(D)$ together define a $\sfG$-equivariant $K$-theory $
\Ind^t(D)$ element on $M$.  The latter group, also known as the ``orbifold $K$-theory'' $K^0_{\rm 
orb}(X)$, is the group completion of the monoid formed by isomorphism classes of $\sfG$-equivariant vector bundles on $M$. In general, the Chern character maps $K^\bullet_{\rm 
orb}(X)$ to the cohomology of the so-called ``inertia orbifold'', but here we are only interested in
its localization at the units $\ch_{[1]}$ giving a $\sfG$-invariant differential 
form on $M$. The integration of $\ch_{[1]}(\Ind^t(D))$ over $M/\sfG$ can be identified directly with the 
index pairing in this paper
\[
\int_{M/\sfG}\ch_{[1]}(\Ind^t(D))=\left<1, \Ind(D)\right>,
\] 
where $1$ stands for the constant function with value 1 on $M$. As $\sfG$ is proper,  up to a scalar, 
$1$ is the only nontrivial cohomology class in $H^\bullet_{\rm diff}(\sfG)$. By the index formula (\ref{eq:orbfld}), we have
\begin{theorem}\label{prop:family} Let $\sfG\rightrightarrows M$ be a proper \'etale groupoid, and $Z$ a $\sfG$-equivariant fiber bundle over $M$ with compact closed fibers $F$. Then we have:
\[
\int_{M/\sfG} \ch_{[1]}(\Ind^t(D))=\int_{T^*_\mu Z}\pi^*\left<c, \hat{A}(\calF)\wedge\ch(\sigma(D))\right>.
\]
\end{theorem}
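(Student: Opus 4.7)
The plan is to derive Theorem \ref{prop:family} as the specialization of the main index Theorem \ref{index-thm} to the constant cohomology class $1 \in H^0_\calF(Z)^\sfG$. The hypotheses of Theorem \ref{index-thm} are met: a proper \'etale groupoid is automatically unimodular, and the assumption that $\mu:Z\to M$ is a fiber bundle with compact fibers $F$, combined with compactness of the orbifold $X = M/\sfG$, ensures that the $\sfG$-action on $Z$ is proper and cocompact with compact leaves of $\calF$ (namely the fibers). Setting $2k=0$ makes the prefactor $(2\pi\sqrt{-1})^{-k}$ equal to $1$, and since the $\hat{A}$-genus is an even polynomial in Pontryagin classes and hence invariant under duality, $\hat{A}(\calF^*) = \hat{A}(\calF)$. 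With these reductions, the right-hand side of Theorem \ref{index-thm} applied to $\alpha = 1$ becomes precisely the right-hand side of Theorem \ref{prop:family}.

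The substantive step is to identify the left-hand sides, i.e.\ to show
\[
\int_{M/\sfG}\ch_{[1]}(\Ind^t(D)) \; = \; \langle 1, \Ind(D)\rangle \; = \; \tau_\Omega(\Ind(D)).
\]
For this I would first describe the topological/family index $\Ind^t(D)\in K^0_{\rm orb}(X)=K^0_\sfG(M)$ as the formal difference $[\ker D] - [\operatorname{coker} D]$ of $\sfG$-equivariant vector bundles on $M$, which are finite-rank because $F$ is compact and $D$ is fiberwise elliptic. Then I would identify this class with the analytic index $\Ind(D)\in K_0(\Psi^{-\infty}_{\rm inv}(Z;\calF))$ via the Morita-type correspondence between $\sfG$-equivariant finite-rank bundles on $M$ and $\sfG$-invariant fiberwise smoothing projectors on $Z$: the fiberwise orthogonal projector onto $\ker D$ (resp.\ $\operatorname{coker} D$) is a smoothing idempotent in $\Psi^{-\infty}_{\rm inv}(Z;\calF)$ representing the same class.

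The key computation is then to evaluate $\tau_\Omega$ on such a projector and recognize the result as the orbifold integral of $\ch_{[1]}$. Using the explicit formula of Proposition \ref{trace_prop},
\[
\tau_\Omega(p) \; = \; \int_M \Big(\int_{\mu^{-1}(x)} c(z)\,k_p(z,z)\,d\nu\Big)\,\Omega,
\]
the inner fiber integral (weighted by the cut-off density $c$, which sums to $1$ over each $\sfG$-orbit on $Z$) extracts the fiberwise pointwise rank of the equivariant bundle as a $\sfG$-invariant function on $M$; the outer integral against $\Omega$ is then the orbifold integration over $X$. By definition of the localization $\ch_{[1]}$ at the identity sector of the inertia orbifold, this is exactly $\int_{M/\sfG}\ch_{[1]}(\Ind^t(D))$ in degree zero.

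The main obstacle is making this last identification precise, in particular verifying that the trace $\tau_\Omega$, under the Morita-type correspondence above, really corresponds to orbifold integration of $\ch_{[1]}$ rather than some other orbifold-cohomological invariant. This is essentially the parametrized \'etale-groupoid analog of the standard fact that the $L^2$-trace of a finite-rank smoothing projector equals its rank, and the cut-off property of $c$ together with the invariance of $\Omega$ are designed to make the two sides agree. Once this compatibility is established, combining it with Theorem \ref{index-thm} applied to $\alpha = 1$ yields Theorem \ref{prop:family} immediately.
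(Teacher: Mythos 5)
Your proposal follows essentially the same route as the paper: specialize Theorem \ref{index-thm} to $\alpha=1$ and identify $\int_{M/\sfG}\ch_{[1]}(\Ind^t(D))$ with the pairing $\langle 1,\Ind(D)\rangle$ via the kernel/cokernel projectors and the trace $\tau_\Omega$. In fact you supply more detail than the paper does (the paper simply asserts the identification of the two left-hand sides), and your remarks on unimodularity of proper \'etale groupoids and on $\hat{A}(\calF^*)=\hat{A}(\calF)$ are correct.
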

Theorem \ref{prop:family} is an example of a family index theorem on orbifolds. We remark that as it is 
only the integration of $\ch_{[1]}(\Ind^t(D))$ over $M/\sfG$, not on the full inertia orbifold, the pairing 
above is in general a rational number, not an integer.


\begin{thebibliography}{}
\bibitem[At]{atiyah} M. Atiyah. {\em Elliptic operators, discrete groups 
and von Neumann algebras}, Ast\'erisque {\bf 32/33} (1976) 43--72, MR0420729, Zbl 0323.58015.

\bibitem[BrMa]{bm} R. Brown, K. Mackenzie.
{\em Determination of a double Lie groupoid by its core diagram},
J. Pure Appl. Algebra \textbf{80} (1992), no. 3, 237--272, MR1170713, Zbl 0766.22001.

\bibitem[Co]{connes} A. Connes.
   Noncommutative Geometry. Academic Press, (1994), MR1303779, Zbl 0818.46076.

\bibitem[CoMo82]{cmL2} A. Connes, H. Moscovici.
{\em The $L^2$-index theorem for homogeneous spaces of Lie groups}, Ann. Math.
\textbf{115} 
291--330 (1982),  MR0647808, Zbl 0515.58031.

\bibitem[CoMo90]{conmos} A. Connes, H.~Moscovici.
   \textit{Cyclic cohomology, the Novikov conjecture and hyperbolic 
groups},
   Topology \textbf{29}, no.~3, 345--388 (1990), MR1066176, Zbl 0759.58047.

\bibitem[Cr]{crainic} M. Crainic.
    \emph{Differentiable and algebroid cohomology, van Est isomorphisms, 
and characteristic classes},  Comment. Math. Helv. \textbf{78} (2003), no. 
4, 681--721, MR2016690,  Zbl 1041.58007.

\bibitem[CrMo]{cm} M. Crainic, I. Moerdijk.
{\em Foliation groupoids and their cyclic homology.} 
Adv. Math. \textbf{157} (2001), no. 2, 177Ð197, MR1813430, Zbl 0989.22010

\bibitem[vE]{ve} W. van Est.
    \emph{Une application d'une m\'ethode de 
Cartan--Leray}, Indag. Math. \textbf{18} (1955) 542--544, MR0073108, Zbl 0067.26203.

\bibitem[ELW]{elw} S. Evens, J.-H. Lu, and A. Weinstein.
    \emph{Transverse measures, the modular class and a cohomology pairing 
for Lie algebroids}, Quart. J. Math. Oxford Ser. (2) \textbf{50} (1999), no. 200,
417--436, MR1726784, Zbl 0968.58014.

\bibitem[GoLo1]{GL} A. Gorokhovsky, J. Lott.
    {\em Local index theory over 
\'etale groupoids}, J. Reine Angew. Math. \textbf{560} (2003), 151--198, MR1992804, Zbl 1034.58021.

\bibitem[GoLo2]{G} A. Gorokhovsky, J. Lott.
    {\em Local index theory over foliation groupoids},
  Adv. Math. \textbf{204} (2006), no. 2, 413--447, MR2249619, Zbl 1106.58014.

\bibitem[GS]{gs} V. Guillemin, S. Sternberg. Geometric asymptotics, Mathematical Surveys, No. \textbf{14}, American Mathematical Society,  Providence, R.I., (1977),  MR0516965, Zbl 0364.53011.

\bibitem[HLS]{hls} N. Higson, V. Lafforgue, and G. Skandalis.
    {\em Counter examples to the Baum--Connes conjecture}, Geom. Funct. Anal. {\bf 
12} (2003) 330--354,  MR1911663, Zbl 1014.46043.

\bibitem[Lo]{loday}  J.L. Loday.
    Cyclic homology, Second edition. Grundlehren der Mathematischen 
Wissenschaften, \textbf{301}.
   Springer-Verlag, Berlin, (1998), MR1600246, Zbl 0885.18007.

\bibitem[MaHi]{mack} K. Mackenzie, P. Higgins.
    {\em Algebraic constructions in the category of Lie Algebroids}, J. of 
Algebra \textbf{129} (1990), 194--230, MR1037400, Zbl 0696.22007.

\bibitem[MeTa]{MT} R. Mehta, X, Tang.
    {\em From double Lie groupoids to local Lie 2-groupoids},
Bull. Braz. Math. Soc. (N.S.) \textbf{42} (2011), no. 4, 651--681, MR2861783, Zbl 1242.53104.

\bibitem[MoSc]{ms} C.C. Moore, C.L. Schochet.
Global analysis on foliated spaces. 
Second edition. Mathematical Sciences Research Institute Publications, 9. Cambridge University Press, New York, 2006. MR2202625, Zbl 1091.58015


\bibitem[MoWu]{mw} H. Moscovici, F. Wu.
    \textit{Index theory without symbols},  in {\em  $C^*$-algebras: 
1943--1993} (San Antonio, TX, 1993),
   304--351, Contemp. Math., \textbf{167},  Amer. Math. Soc.,
   Providence, RI, (1994), MR1292020, Zbl 0819.46058.

\bibitem[NeTs01]{nt-hol} R. Nest, B. Tsygan.
    \emph{Deformations of symplectic Lie algebroids, deformations of 
holomorphic symplectic structures, and index theorems}, Asian J. Math. 
\textbf{5} (2001), no. 4, 599--635, MR1913813, Zbl 1023.53060.

\bibitem[NWX]{nwx} V. Nistor, A. Weinstein, and P. Xu.
    \emph{Pseudodifferential operators on differential groupoids}, Pacific 
J. Math. \textbf{189} (1999), no. 1, 117--152, MR1687747, Zbl 0940.58014.

\bibitem[P]{paterson} A.L.T. Paterson.
    \emph{The equivariant analytic 
index for proper groupoid actions}, $K$-theory \textbf{32}, 193--230 
(2004), MR2114166, Zbl 1075.19002.

\bibitem[PPT1]{pptold} M. Pflaum, H. Posthuma, and X. Tang.
   \emph{Cyclic cocycles on deformation quantizations and higher index 
theory}, Adv. Math.~\textbf{223} (2010) 1958-2021, MR2601006,  Zbl 1190.53087 .

\bibitem[PPT2]{ppt} M. Pflaum, H. Posthuma, and X. Tang.
   {\em The localized longitudinal index theorem for Lie groupoids and the van Est 
map}, \texttt{arXiv:1112.4857}.

\bibitem[Tu]{Tu:bc} J. Tu.
   {\em La conjecture de Novikov pour les feuilletages hyperboliques. (The 
Novikov conjecture for hyperbolic foliations)},
K-Theory \textbf{16} (1999), no. 2, 129--184,  MR1671260, Zbl 0932.19005.

\bibitem[W]{wang} H. Wang.
   {\em $L^2$-index formula for proper cocompact 
group actions}, \texttt{arXiv:1106.452}, to appear in J. Noncommut. Geom.

\end{thebibliography}
 \end{document}